\theoremstyle{plain}
\newtheorem{theorem}{Theorem}[section]
\newtheorem{lem}[theorem]{Lemma}
\newtheorem{cor}[theorem]{Corollary}
\theoremstyle{definition}
\newtheorem{definition}{Definition}
\theoremstyle{remark}
\newtheorem*{remark}{Remark}
\newcommand{\norm}[1]{\left\lVert#1\right\rVert}
\DeclareMathOperator*{\argmin}{arg\,min}
\DeclareMathOperator*{\re}{Re}
\DeclareMathOperator*{\im}{Im}
\DeclareMathOperator*{\diag}{diag}
\DeclareMathOperator*{\trace}{Tr}
\DeclareMathOperator*{\dist}{dist}
\renewcommand{\min}{\operatorname*{minimize}}
\begin{document}
\title{On Gradient Descent Algorithm for Generalized Phase Retrieval Problem}
\author{Ji Li, Tie Zhou
\thanks{Ji Li is with LMAM, School of Mathematical Sciences, Peking University, Beijing
100871, China.}
\thanks{Tie Zhou is with LMAM, School of Mathematical Sciences, Peking University, Beijing
100871, China.}
\thanks{Manuscript received ; revised }}

\maketitle

\begin{abstract}
In this paper, we study the generalized phase retrieval problem: to
recover a signal $\bm{x}\in\mathbb{C}^n$ from the measurements
$y_r=\lvert \langle\bm{a}_r,\bm{x}\rangle\rvert^2$,
$r=1,2,\ldots,m$. The problem can be reformulated as a least-squares
minimization problem.  Although the cost function is nonconvex, the
global convergence of gradient descent algorithm from a random
initialization is studied, when $m$ is large enough. We improve the
known result of the local convergence from a spectral
initialization. When the signal $\bm{x}$ is real-valued, we prove that
the cost function is local convex near the solution
$\{\pm\bm{x}\}$. To accelerate the gradient descent, we review and
apply several efficient line search methods. We also perform a
comparative numerical study of the line search methods and the
alternative projection method. Numerical simulations demonstrate the
superior ability of LBFGS algorithm than other algorithms.
\end{abstract}
\begin{IEEEkeywords}
Phase retrieval, Gradient descent, Global convergence, LBFGS, Local convexity.
\end{IEEEkeywords}
\section{Introduction}
\label{sec:some-lemas}

Phase retrieval is to recover a complex signal from its Fourier intensity. This
problem arises in many engineering and science applications, such as X-ray crystallography~\cite{Millane1990},
electron microscopy~\cite{Misell1973}, X-ray diffraction
imaging~\cite{Shechtman2015}, optics~\cite{Kuznetsova1988} and
astronomy~\cite{Fienup1982}, just name a few. In these applications,
one often has recorded the Fourier transform intensity of a complex
signal, while the phase information is infeasible. Due to the absence
of Fourier phase, the available information is incomplete. It has been
proved that the one-dimensional phase retrieval problem suffers from
essential nonuniqueness, and the multi-dimensional case is usually
less prone to multiple solutions~\cite{Hayes1982,Hayes1982a}. However,
those theories did not lead efficient recovery algorithm. 

The most widely-used algorithms are based on the method of alternating projections, that are the error
reduction (ER) and its variants, such as HIO~\cite{Fienup1982},
HPR~\cite{Luke2003} and RAAR~\cite{Luke2004}. These iterative
projection methods have combined the oversampling
method~\cite{Miao2000} and additional constraints to increase the probability of
finding a solution. The forementioned algorithms often work well for real-valued signal in
practice and show unsatisfied performance for complex-valued signal~\cite{Fienup1987}. These algorithms have limited recovery abilities due to the issue of convergence to local minimizers. They are identified as the
counterparts of iterative projection methods for convex-set feasible
problem~\cite{Bauschke2002}. Since the intensity constraints in Fourier space is not
convex, so the algorithms do not have theoretical guarantees.

Recently there has been a renewed interest in phase retrieval due to technological advances in measurement systems and theoretical developments in structured signal recovery, see literature~\cite{Candes2013} and references therein. In particular, it is now possible to obtain specific kinds of additional intensity information about the signal, depending on the application. The premise of the multiple measurements approach is that, by carefully redesigning the measurement process, one can potentially resolve the phase ambiguity for phase retrieval. Another advantage of this approach is that the analysis and developed algorithms are independent of the dimensional of the signal, as opposed to the alternative projection methods. Mathematically, we consider the generalized phase retrieval problem. It is to 
find a vector $\bm{z}\in\mathbb{C}^n$, given that
\begin{equation}
\label{eq:1}
  y_r=\lvert \langle \bm{a}_r,\bm{z}\rangle\rvert^2,\quad r=1,2,\ldots,m,
\end{equation}
where $\bm{a}_r\in\mathbb{C}^n$ are known sampling vectors, and
$y_r\in \mathbb{R}$ are the intensity measurements. For the recent progress on the generalized phase problem,
we refer the reader to the survey
papers~\cite{Shechtman2015,Jaganathan2015}.

The problem~\eqref{eq:1} can be reformulated as
an NP-hard matrix rank minimization problem by lifting a vector to a
rank-one matrix. Semi-definite programs (SDP), such as
PhaseLift~\cite{Candes2013,Candes2012} and
PhaseCut~\cite{Waldspurger2015,Fogel2013}, are used to solve its
convex relaxation problem based on two different formulations. PhaseLift is to solve the
following convex
optimization problem:
\begin{align*}
  \min\quad &\trace(X)\\
\text{s.t.}\quad&\mathcal{A}_r(X) = y_r,\quad r=1,2,\ldots,m,
\end{align*}
where
$\mathcal{A}_r(X)=\trace(\bm{a}_r\bm{a}_r^*X)=\trace(\bm{a}_r\bm{a}_r^*\bm{x}\bm{x}^*)=\lvert\langle
\bm{a}_r,\bm{x}\rangle\rvert^2=y_r$. The main advantage is that the convex optimization has
theoretical guarantees and efficient numerical methods. A nature
question is how/when the solution of PhaseLift/PhaseCut is also
exactly a solution of the original phase problem and how to design the
sampling vector to guarantee the recovery. It is shown that the
required sampling complexity $m$ is $\mathcal{O}(n)$ for Gaussian
model~\cite{Candes2012} and $\mathcal{O}(n\log^2 n)$ for coded
diffraction pattern model~\cite{Candes2014,Gross2015}. While in principle
SDP-based methods offer tractable solution, they become
computationally prohibitive as the dimension of the signal
increases. So recently, authors of~\cite{Candes2015} reformulated \eqref{eq:1} as a least-squares problem. A solution to the problem
\eqref{eq:1} is any solution to the
optimization problem
\begin{equation}
  \label{eq:2}
  \min_{\bm{z}\in\mathbb{C}^n}\quad f(\bm{z}):=\frac{1}{2m}\sum_{r=1}^{m}(\lvert \bm{a}_r^*\bm{z}\rvert^2-y_r)^2.
\end{equation}

The fixed-stepsize gradient descent algorithm (called Wirtinger Flow
in their paper) is applied to solve \eqref{eq:2} in literature~\cite{Candes2015}. The local
convergence to a global minimizer is also shown if the initialization
is near the global minimizer.

In this paper, we prove the global convergence of gradient
descent algorithm with an appropriate stepsize from a random initialization instead of a good
spectral initialization proposed in literature~\cite{Candes2015}. We
find that the algorithm converges to a global minimizer from a random
initialization when $m$ is large enough (with the complexity $n\log n$ for Gaussian model and $n\log^3n$ for CDP
model) and that all local minimizers of \eqref{eq:2} are global minimizers with high probability. In addition, if the signal is real-valued, the local convexity
of the least-squares cost function in \eqref{eq:2} is proved. For numerical algorithm,
to accelerate the convergence rate, more efficient line search methods
for minimizing function of complex variables, such as nonlinear
conjugate gradient (NCG) and Limited-memory BFGS (LBFGS), are
considered. At the same time of preparing
this paper, we find that literature~\cite{Sun2016} present a
geometrical analysis of phase retrieval and apply the trust-region
method (TRM) to solve the optimization problem. As opposed to our
first-order algorithms, solving a linear equation is needed at each
iteration of solving the subproblem of TRM, so our algorithms is
better in terms of computational cost.

The remainder of this paper is organized as follows. In
Section~\ref{sec:algor-via-minim}, we recall the common least-squares
cost function and derive the gradient and Hessian expressions by
Fr\'{e}chet derivative. The expressions are identical to that from the
definition of the Wirtinger derivative. It follows gradient descent
algorithms with an appropriate stepsize. Besides, some accelerating
line search methods are reviewed in
Section~\ref{sec:unconstr-optim-meth}. In
Section~\ref{sec:two-cases:-main} we present the two main results: one
states the convergence to a solution to~\eqref{eq:1} if provided
enough measurements, the other one states the local convexity of the
cost function~\eqref{eq:2} provided the signal is real-valued.  In
Section~\ref{sec:numer-simul} we test some synthetic models to study the empirical sampling
complexity and the empirical performance of the line search methods
for solving \eqref{eq:1}. Section~\ref{sec:conclusion} concludes the
paper with some discussions.
\section{Gradient Descent and Other First-order Algorithms}
\label{sec:algor-via-minim}
We focus on the common least squares cost objective function to measure the misfit between the
observed data and predicted data. Recall equation
\eqref{eq:1},
if we denote matrix
$A=[\bm{a}_1,\bm{a}_2,\ldots,\bm{a}_m]^*\in\mathbb{C}^{m\times n}$,
objective $f(\bm{z})$ can be rewritten as
\begin{equation}
  \label{eq:3}
  f(\bm{z})=\frac{1}{2m}\norm{\lvert A\bm{z}\rvert^2-\bm{y}}^2,
\end{equation}
where $\lvert\cdot\rvert$ is the componentwise absolute value, and
$\bm{y}=[y_1,y_2,\ldots,y_m]^T$.

\subsection{Gradient and Hessian}
\label{sec:gradient-hessian}

We derive the gradient and Hessian's analytical expressions of the function
$f(\bm{z})$ defined by \eqref{eq:2} in this subsection. Note that the decision variable
$\bm{z}$ is complex and $f(\bm{z})$ is real-valued, so the terminologies,
gradient and Hessian, can be viewed from the perspective of Wirtinger
derivatives~\cite[see Section 6]{Candes2015} or
$\mathbb{C}-\mathbb{R}$ calculus~\cite{Kreutz-Delgado2009}. Here we
use the Fr\'{e}chet derivative for an operator defined in Hilbert
space to deduce its gradient and Hessian, instead of  using the
Wirtinger derivative. We find that the expressions resulting from the
two approaches are identical and the Fr\'{e}chet derivative approach
is more compact and convenient than Wirtinger derivative approach. 

Function $f(\bm{z})$ (see~\eqref{eq:3}) can be recognized as an operator
defined from Hilbert space $\mathbb{C}^n$
to $\mathbb{R}$. Its Fr\'{e}chet derivative at point $\bm{z}$ is 
\begin{equation}
  \label{eq:4}
  Df[\bm{z}](\bm{h}) = \frac{1}{m}\left\langle Dg[\bm{z}](\bm{h}), \lvert A\bm{z}\rvert^2-\bm{y}\right\rangle,
\end{equation}
where $g(\bm{z})=\lvert A\bm{z}\rvert^2-\bm{y}$. Substituting
\begin{equation}
  \label{eq:5}
  Dg[\bm{z}](\bm{h}) = 2\re(A\bm{z}\circ \overline{A\bm{h}})
\end{equation}
into~\eqref{eq:4}, we have that
\begin{align}
  Df[\bm{z}](\bm{h}) &=2m^{-1}\re\left\langle A\bm{h}\circ
                       \overline{A\bm{z}}, \lvert
                       A\bm{z}\rvert^2-\bm{y}\right\rangle  \label{eq:6}\\
& = 2m^{-1}\re\left\langle A\bm{h}, A\bm{z}\circ (\lvert
  A\bm{z}\rvert^2-\bm{y})\right\rangle\nonumber\\
&=2m^{-1}\re\left\langle\bm{h},A^*\bigl(A\bm{z}\circ (\lvert
  A\bm{z}\rvert^2-\bm{y})\bigr)\right\rangle.\nonumber
\end{align}

In a similar way, the Hessian operator (Hessian-vector
multiplication) can be derived. Differentiating~\eqref{eq:6}, we get 
\begin{align*}
  D^2f[\bm{z}](\bm{h},\bm{q}) &=2m^{-1}\re\left\langle
                                \overline{A\bm{q}}\circ A\bm{h},\lvert
                       A\bm{z}\rvert^2-\bm{y}\right\rangle \\
&{}+
                                2m^{-1}\re\left\langle
                                \overline{A\bm{z}}\circ
                                A\bm{h},2\re(A\bm{z}\circ
                                \overline{A\bm{q}})\right\rangle\\
&\hspace*{-1cm}=2m^{-1}\re\left\langle A\bm{h}, (2\lvert A\bm{z}\rvert^2-\bm{y})\circ
  A\bm{q}+(A\bm{z})^2\circ \overline{A\bm{q}}\right\rangle.
\end{align*}
Let $\bm{q}=\bm{h}$, it yields
\begin{multline}
  \label{eq:7}
D^2f[\bm{z}](\bm{h},\bm{h}) \\=2m^{-1}\re\left\langle
\bm{h},A^*\bigl((2\lvert A\bm{z}\rvert^2-\bm{y})\circ
A\bm{h}\bigr)+A^*\bigl((A\bm{z})^2\circ
\overline{A\bm{h}}\bigr)\right\rangle\\
:=2\re\left\langle \bm{h},\mathcal{H}_f[\bm{z}](\bm{h})\right\rangle,
\end{multline}
where $\mathcal{H}_f[\bm{z}](\cdot)$ is the Hessian operator.

According to the Taylor expansion
\begin{align}
f(\bm{z}+\bm{h})&=f(\bm{z})+Df[\bm{z}](\bm{h})+\frac{1}{2}D^2f[\bm{z}](\bm{h},\bm{h})+\text{h.o.t},\notag\\
&\hspace*{-1cm} := f(\bm{z}) + 2\re\left\langle \bm{h},\nabla f(\bm{z})\right\rangle + \re\left\langle \bm{h}, \mathcal{H}_{f}[\bm{z}](\bm{h})\right\rangle
+ \text{h.o.t},\label{eq:8}
\end{align}
so the gradient and Hessian operator are 
\begin{subequations}
\begin{align}
  \label{eq:9}
  \nabla f(\bm{z}) &= m^{-1}A^*\bigl(A\bm{z}\circ (\lvert
  A\bm{z}\rvert^2-\bm{y})\bigr)\\
\mathcal{H}_f(\bm{h}) & = m^{-1}A^*\bigl((2\lvert A\bm{z}\rvert^2-\bm{y})\circ
A\bm{h}\bigr)+A^*\bigl((A\bm{z})^2\circ
\overline{A\bm{h}}\bigr).
\end{align}
\end{subequations}
From the definition of Hessian, the Hessian matrix is given by
\begin{multline*}
  \nabla^2 f(\bm{z}) =\\
  \begin{bmatrix}
    m^{-1}A^*\diag\bigl(2\lvert A\bm{z}\rvert^2-\bm{y}\bigr)A&
    m^{-1}A^*\diag\bigl((A\bm{z})^2\bigr)\overline{A}\\
    m^{-1}A^T\diag\bigl((\overline{A\bm{z}})^2\bigr)A& m^{-1}A^T\diag\bigl(2\lvert A\bm{z}\rvert^2-\bm{y}\bigr)\overline{A}
  \end{bmatrix}.
\end{multline*}

For easy reference, the gradient and Hessian can be expressed in components
$\bm{a}_r$ instead of the above compact form. They have the following forms:
\begin{subequations}
  \begin{align}
    \label{eq:10}
    \nabla f(\bm{z}) &=\frac{1}{m}\sum_{r=1}^m \left(\lvert
                       \bm{a}_r^*\bm{z}\rvert^2-y_r\right)(\bm{a}_r^*\bm{z})\bm{a}_r,\\
\nabla^2 f(\bm{z}) &\\
&\hspace*{-1cm}=\frac{1}{m}\sum_{r=1}^m
    \begin{bmatrix}
      \left(2\lvert
        \bm{a}_r^*\bm{z}\rvert^2-y_r\right)\bm{a}_r\bm{a}_r^*&
      \left(\bm{a}_r^*\bm{z}\right)^2\bm{a}_r\bm{a}_r^T\\
\left(\overline{\bm{a}_r^*\bm{z}}\right)^2\overline{\bm{a}_r}\bm{a}_r^*&\left(2\lvert
        \bm{a}_r^*\bm{z}\rvert^2-y_r\right)\overline{\bm{a}_r}\bm{a}_r^T
    \end{bmatrix}.
  \end{align}
\end{subequations}

\subsection{Algorithms: Gradient Descent and Accelerating Strategies}
\label{sec:unconstr-optim-meth}

Sine we have deduced the expression of gradient, we can construct
iterative algorithms which are only based on the gradient
information. Common optimization algorithms are
constructed for real-valued function with real variables. Since the cost
function~\eqref{eq:2} is real-valued with complex
variables, the optimization is usually carried out with respect to the
real and imaginary part of these variables. Here we consider the straightforward extension of optimization of
function of complex variables. 

Note that the
cost function~\eqref{eq:2} is nonconvex, so all the line search
methods are generally guaranteed to converge to the local minimizers. It is known that
Fourier phase retrieval is prone to local minimizer, which is far away
from the global minimizer. Using multiple measurements by random
masks, we have the advantage that all local minimizers are global
minimizers with high probability, see
Section~\ref{sec:two-cases:-main}. So the line search methods with
local convergence, such as nonlinear conjugate gradient (NCG) and
limited-memory BFGS (LBFGS), can be applied to the generalized phase
retrieval~\eqref{eq:1} with efficient performance and ensure global
convergence in practice.

Line search
methods construct a sequence
\begin{equation}
  \label{eq:12}
  \bm{z}_{k+1} = \bm{z}_k+\alpha_k\bm{d}_k.
\end{equation}
The basic idea is first to choose a descent direction
$\bm{d}_k\in\mathbb{C}^n$, then to refine the iteration with some line
search scheme to choose the appropriate step length
$\alpha_k\in\mathbb{R}$ at $k$th
iteration. The most simple iteration (a.k.a
gradient descent) is stated as following: start with an initialization $\bm{z}_0\neq\bm{0}$, and inductively update
\begin{equation*}
  \bm{z}_{k+1}=\bm{z}_k-\alpha_k\bm{g}_k,
\end{equation*}
where $\alpha_k$ is the stepsize and $\bm{g}_k=\nabla f(\bm{z}_k)$, i.e., taking descent direction $\bm{d}_k=-\bm{g}_k$. 

To accelerate the rate of convergence, nonlinear
conjugate gradient (NCG) method is widely used. The conjugate gradient
direction $\bm{d}_k$ is generated by the recurrence relation
\begin{equation}
  \label{eq:16}
  \bm{d}_k = -\bm{g}_k + \beta_k \bm{d}_{k-1},
\end{equation}
where $\bm{d}_0 = \bm{0}$. There are a variety of options to choose
parameter $\beta_k$ for nonlinear problem~\cite{Steihaug1983}. In this
paper, we take the Hestenes-Stiefel form
\begin{equation}
  \label{eq:17}
  \beta_k^{HS} = -\frac{\re\left(\bm{g}_k^*(\bm{g}_k-\bm{g}_{k-1})\right)}{\re\left(\bm{d}_{k-1}^*(\bm{g}_k-\bm{g}_{k-1})\right)}.
\end{equation}

Although Newton algorithm has two-order convergence rate near the
minimizer, it is not suit for large scale problem, since solving the
Newton equation at each iteration is required. We consider the famous
LBFGS method, which 
is appropriate for large-scale problem, and  the descent direction $\bm{d}_k$ can be obtained by the easy
two-loop recursion, which is described in
Algorithm~\ref{alg:1}.
\begin{algorithm}[H]
  \caption{LBFGS two-loop recursion\label{alg:1}}
  \begin{algorithmic}
    \REQUIRE $\bm{g}_k$, $\bm{s}_i=\bm{z}_{i+1}-\bm{z}_i$,
    $\bm{y}_i=\bm{g}_{i+1}-\bm{g}_i$, $\rho_i =
    \frac{1}{\re(\bm{y}_i^*\bm{s}_i)}$, for $i=k-s,\ldots,k-1$, $s$ is the number of storing pair.
    \ENSURE $\bm{d}$
\STATE $\bm{d}\leftarrow -\bm{g}_k$
\FOR {$i=k-1,k-2,\ldots,k-s$}
\STATE $\alpha_i = \rho_i\re(\bm{s}_i^*\bm{d})$
\STATE $\bm{d}\leftarrow \bm{d}-\alpha_i \bm{y}_i$
\ENDFOR
\STATE $\bm{d}\leftarrow \gamma\bm{d}$, with the scaling suggested
by Shanno and Phua $\gamma=\frac{\re(\bm{y}_{k-1}^*
  \bm{s}_{k-1})}{\bm{y}_{k-1}^*\bm{y}_{k-1}}$
\FOR {$i=k-s,k-s+1,\ldots,k-1$}
\STATE $\beta\leftarrow \rho_i\re(\bm{y}_i^*\bm{d})$
\STATE $\bm{d}\leftarrow \bm{d}+(\alpha_i-\beta)\bm{s}_i$
\ENDFOR
  \end{algorithmic}
\end{algorithm}

With holding global convergence to
local minimizer, step length $\alpha_k$ is not arbitrary. 
For the steepest gradient descent, we strictly characterize the choice
strategy of stepsize $\alpha_k$, which will lead to global
convergence. In next section, we see that our convergence analysis for the gradient descent is based on two
idealizations: (i) the solution $\bm{x}$ is known a priori; and (ii)
the stepsize $\alpha_k$ is obtained by solving a equation with degree of
three, which render the numerical
algorithm impractical. For optimization algorithms, there is
usually a chosen strategy of $\alpha_k$, which are known as Wolfe conditions:
\begin{subequations}
  \begin{equation}
    \label{eq:14}
    f(\bm{z}_k + \alpha_k \bm{d}_k)\leq f(\bm{z}_k) + c_1\alpha_k \re(\bm{d}_k^*\bm{g}_k)
  \end{equation}
and
\begin{equation}
  \label{eq:15}
  \re (\bm{d}_k^*\bm{g}_{k+1}) \geq c_2 \re(\bm{d}_k^*\bm{g}_k),
\end{equation}
\end{subequations}
where condition $0<c_1<c_2<1$ is satisfied. Generally we take $c_1 =
10^{-4}$, $c_2 = 0.9$ as commended in
book~\cite{Nocedal2006}. Equations~\eqref{eq:14} and~\eqref{eq:15} are
known as the sufficient decrease and curvature condition
respectively. We call gradient descent with stepsize $\alpha_k$ by
Wolfe conditions steepest gradient (SD) algorithm.

Our theoretical results about global convergence is only applied to the
simple gradient descent algorithm. But as we will see in
Section~\ref{sec:numer-simul}, the numerical performance of NCG and
LBFGS is superior than the steepest descent (a.k.a, gradient descent with line search scheme by
Wolfe conditions), even the gradient descent we consider here. The
complexities of line search scheme and the choice of descent
direction of NCG and LBFGS hinder the analysis of the convergence.
\section{Theoretical Results}
\label{sec:two-cases:-main}

We assume that the sampling vectors of the model setup are in the Gaussian~\cite{Candes2012} or coded diffraction pattern (CDP)~\cite{Candes2014,Candes2015} models, which are defined
below. Gaussian model has more theoretical interests than
CDP, but the latter is a more physical realizable model. 

If $\bm{a}_r\in \mathbb{C}^n$ are drawn from
$N(\bm{0},\bm{I}/2)+iN(\bm{0},\bm{I}/2)$, we say that sampling vectors
follow the Gaussian model. In CDP model, we collect multiple diffraction patterns with
different masks $\bm{d}_l\in\mathbb{C}^n$. The
observation data are 
\begin{multline*}
  \label{eq:11}
  y_{l,k} = \lvert\langle \bm{f}_k,
  \bm{d}_l\circ\bm{x}\rangle\rvert^2=\lvert\bm{f}_k^*D_l\bm{x} \rvert^2, \\
  \quad l=1,2,\ldots,L,\quad k = 1,2,\ldots,n,\quad D_l=\diag(\bm{b}_l),
\end{multline*}
where $\bm{f}_k^*$ is the $k$th row of the discrete Fourier transform matrix, i.e.,
$\bm{f}_k=[1,\exp(i2\pi(k-1)/n),\ldots,\exp(i2\pi(n-1)(k-1)/n)]^T$. And each entry of mask $\bm{b}_l$ samples
from a distribution $b$. 
Assume the entry $b=b_1b_2$, where $b_1$ and $b_2$ are independent and
distributed as: $b_1$ is sampled from $\{-1,1,-i,i\}$ with equal
probability $1/4$, and $b_2$ from $\{\sqrt{2}/2,\sqrt{3}\}$ with
probability $4/5$ and $1/5$ respectively. This pattern is called
octanary in paper~\cite{Candes2015}.

We present the main results in this section and put the proofs and technical lemmas in the Appendix of this paper.

\subsection{Global Convergence of Gradient Descent}
\label{sec:glob-conv-grad}

\begin{definition}
  Let $\bm{x}\in\mathbb{C}^n$ be any solution to the phase retrieval
  problem~\eqref{eq:1}. For each $\bm{z}\in\mathbb{C}^n$, define
  \begin{equation*}
\phi(\bm{z})=\argmin_{\phi\in[0,2\pi]}\norm{\bm{z}-e^{i\phi}\bm{x}},\quad
\bm{h(\bm{z})}=\frac{\bm{z}-e^{i\phi(\bm{z})}\bm{x}}{\norm{\bm{z}-e^{i\phi(\bm{z})}\bm{x}}},
  \end{equation*}
then the distance of $\bm{z}$ to the solution set $\chi=\{\bm{x}e^{i\theta}\colon
\theta\in[0,2\pi)\}$ is 
\begin{equation*}
  \dist(\bm{z},\bm{x})\colon=\dist(\bm{z},\chi)=\norm{\bm{z}-\bm{x}e^{i\phi(\bm{z})}}.
\end{equation*}
For real case, 
\begin{equation*}
   \dist(\bm{z},\bm{x})\colon=\dist(\bm{z},\chi)=\operatorname{min}\{\norm{\bm{z}-\bm{x}},\norm{\bm{z}+\bm{x}}\}.
\end{equation*}
\end{definition}

It is not difficult to see that
$\im(\bm{h}^*\bm{x}e^{i\phi(\bm{z})})=0$ and
$\re(\bm{h}^*\bm{x}e^{i\phi(\bm{z})})=\norm{\bm{x}}$.

\begin{lem}[\cite{Candes2015}, Lemma 7.1]
\label{lem:1}
  Assume that the solution $\bm{x}$ of the phase retrieval problem is
  independent from the sampling vectors. Furthermore, the sampling
  vectors $\bm{a}_r$ are distributed according to either the Gaussian
  or admissible CDP model. Then
  \begin{multline*}
    \label{eq:22}
    \mathbb{E}[\nabla^2 f(\bm{x})] =
                            \begin{bmatrix}
                              \norm{\bm{x}}^2I+\bm{x}\bm{x}^*&2\bm{x}\bm{x}^T\\
                              2\bar{\bm{x}}\bm{x}^*&\norm{\bm{x}}^2I+\bar{\bm{x}}\bm{x}^T
                            \end{bmatrix}
\\
    = \norm{x}^2I+\frac{3}{2}
    \begin{bmatrix}
      \bm{x}\\\overline{\bm{x}}
    \end{bmatrix}
    \begin{bmatrix}
      \bm{x}^*&\bm{x}^T
    \end{bmatrix}-\frac{1}{2}
    \begin{bmatrix}
      \bm{x}\\-\overline{\bm{x}}
    \end{bmatrix}
    \begin{bmatrix}
      \bm{x}^*&-\bm{x}^T
    \end{bmatrix}.
  \end{multline*}
\end{lem}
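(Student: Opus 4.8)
The plan is to evaluate $\mathbb{E}[\nabla^2 f(\bm{x})]$ directly from the component form of the Hessian in~\eqref{eq:10}, reducing the fourth-order moments of the sampling vectors to products of second-order moments, and then to verify the stated rank-one decomposition by a routine block expansion. First I would specialize the Hessian at $\bm{z}=\bm{x}$. Since $\bm{x}$ solves~\eqref{eq:1}, we have $y_r=\lvert\bm{a}_r^*\bm{x}\rvert^2$, so the diagonal factor collapses to $2\lvert\bm{a}_r^*\bm{x}\rvert^2-y_r=\lvert\bm{a}_r^*\bm{x}\rvert^2$. In the Gaussian model the $\bm{a}_r$ are i.i.d.\ and independent of $\bm{x}$, so the normalized sum $\frac{1}{m}\sum_r$ reduces $\mathbb{E}[\nabla^2 f(\bm{x})]$ to the expectation of a single summand, and it suffices to compute, for one generic vector $\bm{a}$, the four blocks
\begin{align*}
&\mathbb{E}\bigl[\lvert\bm{a}^*\bm{x}\rvert^2\,\bm{a}\bm{a}^*\bigr],\qquad
\mathbb{E}\bigl[(\bm{a}^*\bm{x})^2\,\bm{a}\bm{a}^T\bigr],\\
&\mathbb{E}\bigl[(\overline{\bm{a}^*\bm{x}})^2\,\overline{\bm{a}}\bm{a}^*\bigr],\qquad
\mathbb{E}\bigl[\lvert\bm{a}^*\bm{x}\rvert^2\,\overline{\bm{a}}\bm{a}^T\bigr].
\end{align*}

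For the Gaussian model the core step is an entrywise expansion together with Wick's (Isserlis') theorem for the circularly-symmetric complex Gaussian, which rests on the two elementary facts $\mathbb{E}[\overline{a}_j a_k]=\delta_{jk}$ and $\mathbb{E}[a_j a_k]=0$. For the first block the $(p,q)$ entry is $\sum_{j,k}x_j\overline{x}_k\,\mathbb{E}[\overline{a}_j a_k a_p \overline{a}_q]$, and pairing each of the two conjugated factors with an unconjugated one gives $\delta_{jk}\delta_{pq}+\delta_{jp}\delta_{kq}$, whence $\norm{\bm{x}}^2 I+\bm{x}\bm{x}^*$. The same pairing applied to $(\bm{a}^*\bm{x})^2\bm{a}\bm{a}^T$ produces two coincident Kronecker terms and yields $2\bm{x}\bm{x}^T$; the two lower blocks then follow by complex conjugation, reproducing the first displayed matrix. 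For the admissible CDP model I would invoke the moment conditions built into the octanary design, namely $\mathbb{E}[b]=\mathbb{E}[b^2]=0$, $\mathbb{E}[\lvert b\rvert^2]=1$, $\mathbb{E}[\lvert b\rvert^4]=2$ (which the stated $b=b_1b_2$ pattern indeed satisfies), so that the same second- and fourth-order reductions, now combined with the orthogonality relations of the Fourier rows $\bm{f}_k$, return the identical four blocks.

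The final step is purely algebraic: I would expand the two rank-one outer products on the right-hand side and check, block by block, that their sum plus $\norm{\bm{x}}^2 I$ on the diagonal matches the first matrix—the off-diagonal blocks combining the $\frac{3}{2}$ and $\pm\frac{1}{2}$ weights into $2\bm{x}\bm{x}^T$ and $2\overline{\bm{x}}\bm{x}^*$, and the diagonal contribution $\frac{3}{2}\bm{x}\bm{x}^*-\frac{1}{2}\bm{x}\bm{x}^*$ collapsing to $\bm{x}\bm{x}^*$. The main obstacle is bookkeeping rather than depth: one must track carefully which indices are conjugated in the fourth-moment expansion, and—more substantially—confirm that the Fourier-plus-mask structure of the CDP model genuinely reproduces the Gaussian fourth moments instead of the easier independent-coordinate calculation. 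Since the statement is quoted from~\cite{Candes2015}, I would lean on that reference for the CDP verification and present the Gaussian computation in full.
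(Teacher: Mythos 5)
Your computation is correct, but there is nothing internal to compare it against: the paper never proves this lemma. It is imported verbatim from \cite{Candes2015} (Lemma 7.1), as the bracketed citation in the lemma header signals, and none of the appendix proofs touch it (the appendix only proves the downstream Lemma~\ref{lem:3} about its eigenvalues). Your proposal therefore supplies strictly more than the paper does, and the Gaussian half is sound: at $\bm{z}=\bm{x}$ the factor $2\lvert\bm{a}_r^*\bm{x}\rvert^2-y_r$ indeed collapses to $\lvert\bm{a}_r^*\bm{x}\rvert^2$, i.i.d.-ness reduces the average to a single summand, and Wick pairing with $\mathbb{E}[\overline{a}_j a_k]=\delta_{jk}$, $\mathbb{E}[a_j a_k]=0$ gives exactly $\norm{\bm{x}}^2I+\bm{x}\bm{x}^*$ for the $(1,1)$ block (pairings $\delta_{jk}\delta_{pq}+\delta_{jp}\delta_{kq}$) and $2\bm{x}\bm{x}^T$ for the $(1,2)$ block (the two coincident pairings $\delta_{jp}\delta_{kq}+\delta_{jq}\delta_{kp}$), the lower blocks following by conjugation; the closing rank-one bookkeeping ($\tfrac{3}{2}-\tfrac{1}{2}=1$ on the diagonal blocks, $\tfrac{3}{2}+\tfrac{1}{2}=2$ off the diagonal) also checks out. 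One substantive caution on the CDP half: your octanary moments are correct ($\mathbb{E}[b]=\mathbb{E}[b^2]=0$, $\mathbb{E}[\lvert b\rvert^2]=1$, $\mathbb{E}[\lvert b\rvert^4]=2$), but the CDP case is not the Gaussian argument with different scalar moments. Within one pattern the vectors $\bm{a}_{(l,k)}$, $k=1,\ldots,n$, share the same random mask and are not independent; the expectation is taken over the masks only, the Fourier rows enter deterministically, and the sum over $k$ must be carried out explicitly using identities such as $\sum_k\bm{f}_k\bm{f}_k^*=nI$, so the reduction to a single summand is unavailable. You flag exactly this obstacle and defer it to \cite{Candes2015}, which is legitimate here, since deferring to that reference for the entire lemma is precisely what the paper itself does.
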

\begin{lem}
\label{lem:3}
  For the expectation of the Hessian $\mathbb{E}[\nabla^2
  f(\bm{x})]$, then its eigenvalues are
  $\{4\norm{\bm{x}}^2,\underbrace{\norm{\bm{x}}^2,\ldots,\norm{\bm{x}}^2}_{2n-2~
  \text{terms}},0\}$. Furthermore the expectation matrix is semi-definite. In
the real case, i.e., the solution
  $\bm{x}\in\mathbb{R}^n$, the expectation of the Hessian $\mathbb{E}[\nabla^2 f(\bm{x})]$ is $\norm{\bm{x}}^2I+3\bm{x}\bm{x}^T$. It is definite, since 
its eigenvalues are
$\{4\norm{\bm{x}}^2,\underbrace{\norm{\bm{x}}^2,\ldots,\norm{\bm{x}}^2}_{n-1~\text{terms}}\}$.
\end{lem}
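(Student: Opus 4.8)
The plan is to diagonalize the $2n\times 2n$ Hermitian matrix supplied by Lemma~\ref{lem:1} by exploiting the rank-two structure of the perturbation away from a multiple of the identity. Writing $\bm{u}=\begin{bmatrix}\bm{x}\\\overline{\bm{x}}\end{bmatrix}$ and $\bm{v}=\begin{bmatrix}\bm{x}\\-\overline{\bm{x}}\end{bmatrix}$, the second expression in Lemma~\ref{lem:1} reads
\[
\mathbb{E}[\nabla^2 f(\bm{x})]=\norm{\bm{x}}^2 I+\tfrac{3}{2}\bm{u}\bm{u}^*-\tfrac{1}{2}\bm{v}\bm{v}^*,
\]
so the problem reduces to understanding how this Hermitian rank-two update acts on $\mathbb{C}^{2n}$.

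The key observation, and essentially the only computation that matters, is that $\bm{u}$ and $\bm{v}$ are orthogonal: $\bm{u}^*\bm{v}=\bm{x}^*\bm{x}-\bm{x}^T\overline{\bm{x}}=\norm{\bm{x}}^2-\norm{\bm{x}}^2=0$, while $\bm{u}^*\bm{u}=\bm{v}^*\bm{v}=2\norm{\bm{x}}^2$. With this in hand the two update directions decouple: applying the matrix to $\bm{u}$ gives $\norm{\bm{x}}^2\bm{u}+\tfrac{3}{2}(2\norm{\bm{x}}^2)\bm{u}=4\norm{\bm{x}}^2\bm{u}$, and applying it to $\bm{v}$ gives $\norm{\bm{x}}^2\bm{v}-\tfrac{1}{2}(2\norm{\bm{x}}^2)\bm{v}=0$. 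Hence $\bm{u}$ is an eigenvector with eigenvalue $4\norm{\bm{x}}^2$ and $\bm{v}$ an eigenvector with eigenvalue $0$. On the orthogonal complement $\{\bm{u},\bm{v}\}^{\perp}$, of dimension $2n-2$, both rank-one terms vanish and the matrix reduces to $\norm{\bm{x}}^2 I$, supplying the eigenvalue $\norm{\bm{x}}^2$ with multiplicity $2n-2$. Collecting these yields the claimed spectrum $\{4\norm{\bm{x}}^2,\ \norm{\bm{x}}^2\ (\text{multiplicity } 2n-2),\ 0\}$, and since every eigenvalue is nonnegative the matrix is positive semi-definite.

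For the real case I would first record the reduced form of the expectation: specializing Lemma~\ref{lem:1} to $\overline{\bm{x}}=\bm{x}$ and combining the four blocks for real perturbations (the slice $\bm{h}=\overline{\bm{h}}$) gives the $n\times n$ symmetric matrix $\norm{\bm{x}}^2 I+3\bm{x}\bm{x}^T$. The same rank-one argument then finishes the job: $(\norm{\bm{x}}^2 I+3\bm{x}\bm{x}^T)\bm{x}=4\norm{\bm{x}}^2\bm{x}$, and every vector in $\bm{x}^{\perp}\subset\mathbb{R}^n$ is an eigenvector with eigenvalue $\norm{\bm{x}}^2$, giving the spectrum $\{4\norm{\bm{x}}^2,\ \norm{\bm{x}}^2\ (\text{multiplicity } n-1)\}$; all entries are strictly positive for $\bm{x}\neq\bm{0}$, so the matrix is positive definite.

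I do not expect a genuine obstacle here: once the rewriting from Lemma~\ref{lem:1} is in place, the entire statement follows from the single algebraic fact $\bm{u}\perp\bm{v}$. The only point requiring care is the passage to the real case, where one must be explicit that ``the Hessian'' means the reduced $n\times n$ operator acting on real directions, so that the eigenvector count drops from $2n$ to $n$ and the zero eigenvalue — which in the complex case reflects the global phase symmetry carried by $\bm{v}$ — disappears, leaving a strictly positive spectrum.
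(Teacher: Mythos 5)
Your proof is correct, but it takes a genuinely different route from the paper's. The paper proves the complex case by computing the characteristic polynomial: it writes $\lambda I-\mathbb{E}[\nabla^2 f(\bm{x})]$ as $(\lambda-\norm{\bm{x}}^2)I$ minus a rank-two term and invokes Lemma~\ref{lem:2} (a block-determinant identity) to collapse the $2n\times 2n$ determinant to a $2\times 2$ one; that small determinant is diagonal precisely because of the same orthogonality $\bm{u}^*\bm{v}=0$ you exploit, giving $P(\lambda)=(\lambda-\norm{\bm{x}}^2)^{2n-2}\lambda(\lambda-4\norm{\bm{x}}^2)$ at once, and the real case is dismissed as ``similar.'' You instead exhibit $\bm{u}$ and $\bm{v}$ as eigenvectors, note that both rank-one terms annihilate $\{\bm{u},\bm{v}\}^{\perp}$, and conclude by the spectral theorem for Hermitian matrices that this exhausts the spectrum; the paper actually records your eigenvectors in the remark following the lemma, but does not use them in its proof. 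Your argument is more elementary and self-contained (no determinant identity needed) and makes transparent that the null direction is the phase-invariance direction $i\bm{x}$. What the paper's machinery buys is reusability: Lemma~\ref{lem:2} yields the spectrum of such low-rank updates without guessing eigenvectors and without any orthogonality assumption, and the paper reuses exactly this to get the eigenvalues of $(\bm{x}^T\bm{w})I+\bm{x}\bm{w}^T+\bm{w}\bm{x}^T$ needed for Theorem~\ref{thm:2}, where the directions $\bm{x}$ and $\bm{w}$ are not orthogonal and your decoupling argument would not carry over unchanged. Both treatments are equally brief on the real case, and your sketch of the reduction to $\norm{\bm{x}}^2I+3\bm{x}\bm{x}^T$ on real directions is adequate.
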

\begin{remark}
  Obviously, vector $
  \begin{bmatrix}
    \bm{x}\\\bar{\bm{x}}
  \end{bmatrix}
$ and $ \begin{bmatrix}
    i\bm{x}\\-i\bar{\bm{x}}
  \end{bmatrix}$ are the eigenvectors corresponding to eignevalues
  $4\norm{\bm{x}}^2$ and $0$ respectively by
calculus.
\end{remark}
\begin{lem}[\cite{Candes2015}, Lemma 7.4]
\label{lem:4}
 Assume the vectors $\bm{a}_r$ are
  distributed according to either the Gaussian or admissible CDP model
  with a sufficiently large number of measurements. This means that the
  number of samples obeys $m\geq c(\delta)n\log n$ in the Gaussian
  model and the number of patterns obeys $L\geq c(\delta)\log^3 n$ in
  the CDP model. Then
  \begin{equation*}
    \norm{\nabla^2 f(\bm{x})-\mathbb{E}[\nabla^2 f(\bm{x})]}\leq \delta\norm{\bm{x}}^2
  \end{equation*}
holds with probability at least $1-10e^{-\gamma n}-8/n^2$ and
$1-(2L+1)/n^3$ for the Gaussian and CDP models respectively.
\end{lem}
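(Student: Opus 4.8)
The plan is to write the deviation as a sum of independent centered random matrices and to control its operator norm with the matrix Bernstein inequality, supplemented by a truncation that tames the heavy tails of the summands. By homogeneity I would first normalize $\norm{\bm{x}}=1$, since both sides of the claimed bound scale as $\norm{\bm{x}}^2$. Reading the per-sample contribution off the componentwise formula for $\nabla^2 f$ and using $y_r=\lvert\bm{a}_r^*\bm{x}\rvert^2$ at $\bm{z}=\bm{x}$ (so that $2\lvert\bm{a}_r^*\bm{x}\rvert^2-y_r=\lvert\bm{a}_r^*\bm{x}\rvert^2$), each summand collapses to a rank-one positive semidefinite block $\bm{w}_r\bm{w}_r^*$, where $\bm{w}_r$ is the stacked vector with top half $\bm{b}_r=(\bm{a}_r^*\bm{x})\bm{a}_r$ and bottom half $\overline{\bm{b}_r}$; a direct check shows the four blocks reproduce exactly the entries of $\nabla^2 f(\bm{x})$. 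Hence $\nabla^2 f(\bm{x})-\mathbb{E}[\nabla^2 f(\bm{x})]=\frac1m\sum_{r=1}^m\bm{Z}_r$ with $\bm{Z}_r=\bm{w}_r\bm{w}_r^*-\mathbb{E}[\bm{w}_r\bm{w}_r^*]$ an i.i.d. sum of centered Hermitian $2n\times 2n$ matrices in the Gaussian case, reducing the lemma to showing $\norm{\sum_r\bm{Z}_r}\le\delta m$ with the stated probability.

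Matrix Bernstein needs a variance proxy and a scale for the summands. For the variance, $\mathbb{E}[\bm{Z}_r^2]\preceq\mathbb{E}[(\bm{w}_r\bm{w}_r^*)^2]=\mathbb{E}[\norm{\bm{w}_r}^2\,\bm{w}_r\bm{w}_r^*]$, and since $\norm{\bm{w}_r}^2=2\lvert\bm{a}_r^*\bm{x}\rvert^2\norm{\bm{a}_r}^2$ concentrates around $2n$, an elementary Gaussian moment computation gives $\norm{\mathbb{E}[\bm{Z}_r^2]}=O(n)$ and thus variance $v:=\norm{\sum_r\mathbb{E}[\bm{Z}_r^2]}=O(mn)$. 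Because the summands are unbounded I would truncate on the event that $\lvert\bm{a}_r^*\bm{x}\rvert^2\lesssim\log n$ and $\norm{\bm{a}_r}^2\lesssim n$: the chi-square tail of $\norm{\bm{a}_r}^2$ supplies the $e^{-\gamma n}$ term, while $\bm{a}_r^*\bm{x}\sim\mathcal{CN}(0,1)$ together with a union bound over the $m$ samples makes the truncation fail only with polynomially small probability, leaving a per-summand scale $L\approx n$ up to logarithmic factors.

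Feeding $v=O(mn)$, the scale $L$, dimension $2n$, and $t=\delta m$ into the Bernstein bound $\Pr[\,\norm{\sum_r\bm{Z}_r}\ge t\,]\le 4n\exp(-\tfrac{t^2/2}{v+Lt/3})$ and requiring the exponent to exceed a constant multiple of $\log n$ — enough to absorb the $4n$ prefactor and leave a genuine $n^{-2}$ tail — forces the variance term to dominate, which yields $m\ge c(\delta)\,n\log n$ and the advertised probability $1-10e^{-\gamma n}-8/n^2$. The CDP model is handled by the same truncation-plus-Bernstein scheme applied over the $L$ independent mask patterns rather than individual rows: the deterministic Fourier rows are combined with the admissible-mask moment conditions to compute the analogous variance and scale, and the $2n$-dimensional factor together with the weaker per-pattern concentration produces the $L\ge c(\delta)\log^3 n$ requirement and the $1-(2L+1)/n^3$ bound.

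The crux, and the step I expect to be delicate, is the truncation analysis. Since $\lvert\bm{a}_r^*\bm{x}\rvert^2$ is only subexponential, a naive uniform bound inflates $L$ to order $n\log n$, in which case the $Lt$ term in the Bernstein exponent would demand $n\log^2 n$ samples instead of $n\log n$; obtaining the sharp rate requires exploiting that the summand norms are genuinely subexponential (scale $\approx n$), so that the variance $v$ rather than the uniform bound governs the sample complexity, and then verifying that the bias from truncating the mean and the dropped-mass correction both stay below $\delta\norm{\bm{x}}^2/2$. Checking this balance, rather than the moment computations themselves, is where the real work lies.
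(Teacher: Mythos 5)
The first thing to note is that the paper contains no proof of this statement to compare against: Lemma~\ref{lem:4} is quoted verbatim from \cite{Candes2015} (Lemma 7.4 there) and used as a black box throughout the appendix. Your attempt therefore has to be measured against the source's argument, which, like yours, is a truncation-plus-concentration argument. Much of your setup is correct and worth affirming: at $\bm{z}=\bm{x}$ one indeed has $2\lvert\bm{a}_r^*\bm{x}\rvert^2-y_r=\lvert\bm{a}_r^*\bm{x}\rvert^2$, the stacked vector $\bm{w}_r$ built from $(\bm{a}_r^*\bm{x})\bm{a}_r$ and its conjugate reproduces all four blocks of the $r$-th summand as $\bm{w}_r\bm{w}_r^*$, the variance bound $\norm{\mathbb{E}[\bm{Z}_r^2]}\leq\norm{\mathbb{E}[\norm{\bm{w}_r}^2\bm{w}_r\bm{w}_r^*]}=O(n)$ is right, and your truncation events have tails consistent with the stated probability of the lemma.

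The genuine gap is the one you flag yourself in the last paragraph, and it is not a peripheral delicacy: it is the entire content of the lemma. With truncation at $\lvert\bm{a}_r^*\bm{x}\rvert^2\lesssim\log n$ the per-summand bound is $L=O(n\log n)$, and the bounded matrix Bernstein inequality with $t=\delta m$ and $v=O(mn)$ gives exponent $\frac{\delta^2m/2}{O(n)+O(\delta n\log n)}$, which stays \emph{bounded} as $n\to\infty$ when $m=cn\log n$; the dimensional prefactor $4n$ is then not absorbed, the bound is vacuous, and one is forced to $m\gtrsim n\log^2 n$. Saying the cure is to ``exploit that the summands are genuinely subexponential'' names the destination without supplying the route. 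Two routes actually close it: (i) a matrix Bernstein inequality under the moment-growth condition $\mathbb{E}[\bm{Z}_r^p]\preceq\frac{p!}{2}R^{p-2}\,\mathbb{E}[\bm{Z}_r^2]$ with $R=O(n)$, which must itself be verified (it requires controlling $\mathbb{E}[\norm{\bm{w}_r}^{2p}]\leq p!\,(Cn)^p$, handling the correlation between $\lvert\bm{a}_r^*\bm{x}\rvert$ and $\norm{\bm{a}_r}$ by splitting $\bm{a}_r$ along and orthogonal to $\bm{x}$, and only holds for $p\lesssim n$, which suffices at the target probability level); or (ii) the source's route of a net argument with scalar Bernstein, where for each fixed unit $\bm{u}$ the truncated summand has scale $O(\log n)$ rather than $O(n\log n)$, and the required exponent is $O(n)$ rather than $O(\log n)$, so the product of scale and exponent costs $n\log n$ rather than $n\log^2 n$. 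Either way, this step is the theorem; without it your argument proves a strictly weaker statement.

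Separately, your CDP paragraph is an assertion, not an argument. Within one pattern the $n$ rows $\bm{f}_k^*D_l$ all share the same random mask, so they are not independent; the Fourier vectors are deterministic; and a Bernstein bound over the $L=O(\log^3 n)$ independent pattern-level summands requires computing the per-pattern variance and scale under the admissible-mask moment conditions and a truncation of $\max_k\lvert\bm{f}_k^*D_l\bm{x}\rvert^2$, none of which appears. This half of the lemma is where the proof in \cite{Candes2015} does most of its work, and in your proposal it is missing entirely.
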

\begin{lem}
\label{lem:8}
  Let $\bm{z}\in\mathbb{C}^n$ be a fixed vector
  independent of the sampling vectors. We have
  \begin{equation*}
    \mathbb{E}[\nabla f(\bm{z})]=(2\norm{\bm{z}}^2-\norm{\bm{x}}^2)\bm{z}-(\bm{x^*}\bm{z})\bm{x}.
  \end{equation*}
\end{lem}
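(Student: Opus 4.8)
The plan is to exploit the i.i.d.\ structure of the sampling vectors together with the explicit gradient formula~\eqref{eq:10}. Since $\bm{z}$ is fixed and independent of the $\bm{a}_r$, and $y_r=\lvert\bm{a}_r^*\bm{x}\rvert^2$, linearity of expectation reduces the claim to a single-vector moment identity: writing $\bm{a}$ for a generic sampling vector, it suffices to show that
\begin{equation*}
  \mathbb{E}\Bigl[\bigl(\lvert\bm{a}^*\bm{z}\rvert^2-\lvert\bm{a}^*\bm{x}\rvert^2\bigr)(\bm{a}^*\bm{z})\,\bm{a}\Bigr]
  =(2\norm{\bm{z}}^2-\norm{\bm{x}}^2)\bm{z}-(\bm{x}^*\bm{z})\bm{x}.
\end{equation*}
The quantity inside the expectation is homogeneous of degree two in $\bm{a}$ and of degree two in $\overline{\bm{a}}$, so this is a fourth-order moment that I will evaluate entrywise.

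For the Gaussian model the entries of $\bm{a}$ form a circularly symmetric complex Gaussian with $\mathbb{E}[a_i\bar{a}_j]=\delta_{ij}$ and vanishing pseudo-covariance $\mathbb{E}[a_ia_j]=0$. By Wick's (Isserlis') theorem the only surviving fourth moment is
\begin{equation*}
  \mathbb{E}[\bar{a}_i\bar{a}_j a_k a_p]=\delta_{ik}\delta_{jp}+\delta_{ip}\delta_{jk},
\end{equation*}
obtained by pairing each conjugated factor with an unconjugated one (contractions of the form $\mathbb{E}[\bar a\bar a]$ drop out). I would then expand the two pieces separately. Writing $\lvert\bm{a}^*\bm{z}\rvert^2(\bm{a}^*\bm{z})a_p=\sum_{i,j,k}z_iz_j\bar{z}_k\,\bar{a}_i\bar{a}_j a_k a_p$ and applying the contraction, the first pairing yields $\norm{\bm{z}}^2 z_p$ and the second yields $\norm{\bm{z}}^2 z_p$ again, for a total of $2\norm{\bm{z}}^2\bm{z}$. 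Treating $\lvert\bm{a}^*\bm{x}\rvert^2(\bm{a}^*\bm{z})a_p=\sum_{i,j,k}x_iz_j\bar{x}_k\,\bar{a}_i\bar{a}_j a_k a_p$ in the same way, the two pairings give $\norm{\bm{x}}^2 z_p$ and $(\bm{x}^*\bm{z})x_p$. Subtracting produces exactly the claimed vector.

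The computation is essentially index bookkeeping, so the only genuine step is justifying that the same moment identity holds for the admissible (octanary) CDP model, where $\bm{a}$ is a Fourier vector modulated by a random mask rather than a Gaussian. Here I would check that the mask entries reproduce the relevant low-order moments of the standard complex Gaussian, namely $\mathbb{E}[b]=0$, $\mathbb{E}[b^2]=0$, $\mathbb{E}[\lvert b\rvert^2]=1$ and $\mathbb{E}[\lvert b\rvert^4]=2$; since the Fourier factors have unit modulus, these are precisely the inputs that make the entrywise contraction go through unchanged. I expect this moment-matching verification to be the main (though routine) obstacle, and it is the reason the octanary pattern is designed the way it is. Alternatively, one may observe that this first-order moment is of the same type already computed in Lemma~\ref{lem:1} for $\mathbb{E}[\nabla^2 f(\bm{x})]$, so consistency with that lemma provides a useful independent check on the constants.
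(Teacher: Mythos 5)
Your proposal is correct and is essentially the paper's own argument: both reduce, by linearity and independence, to the rank-one fourth-moment identity $\mathbb{E}\bigl[\lvert\bm{a}^*\bm{u}\rvert^2\bm{a}\bm{a}^*\bigr]=\norm{\bm{u}}^2I+\bm{u}\bm{u}^*$ applied with $\bm{u}=\bm{z}$ and $\bm{u}=\bm{x}$ and then multiplied by $\bm{z}$; the paper simply cites this identity at the matrix level, whereas you re-derive it entrywise via Wick contractions and additionally verify the octanary moment conditions for the CDP model, a point the paper leaves implicit. Your contractions reproduce the paper's computation $(\norm{\bm{z}}^2I+\bm{z}\bm{z}^*)\bm{z}-(\norm{\bm{x}}^2I+\bm{x}\bm{x}^*)\bm{z}=(2\norm{\bm{z}}^2-\norm{\bm{x}}^2)\bm{z}-(\bm{x}^*\bm{z})\bm{x}$ exactly.
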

\begin{lem}
\label{lem:9}
  In the setup of Lemma~\ref{lem:4}, let $\bm{z}\in\mathbb{C}^n$ be a fixed vector
  independent of the sampling vectors. Then
  \begin{equation}
    \label{eq:23}
    \norm{\nabla f(\bm{z})-\mathbb{E}[\nabla f(\bm{z})]}\leq \frac{\delta}{2}\norm{\mathbb{E}[\nabla
      f(\bm{z})]}
  \end{equation}
holds with probability at least $1-20e^{-\gamma m}-4m/n^4$ in the
Gaussian model and $1-(4L+2)/n^3$ in the CDP model. Furthermore, if
$\bm{z}$ obeying
  $\dist(\bm{z},\bm{x})\leq \norm{\bm{x}}/2$, then 
\begin{equation*}
    \norm{\nabla f(\bm{z})-\mathbb{E}[\nabla f(\bm{z})]}\leq 4\delta\dist(\bm{z},\bm{x})\norm{\bm{x}}^2
  \end{equation*}
holds with the same high probability.
\end{lem}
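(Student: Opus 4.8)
The plan is to prove the first (relative) inequality by a concentration-of-measure argument and then to obtain the second inequality from it by a purely deterministic estimate of $\norm{\mathbb{E}[\nabla f(\bm z)]}$ valid near the solution set; the two bounds then hold on the \emph{same} event, which is why the stated probability is identical.

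First I would write the deviation as an empirical average of i.i.d.\ (Gaussian model) terms,
\[
\bm w := \nabla f(\bm z)-\mathbb{E}[\nabla f(\bm z)] = \frac1m\sum_{r=1}^m\bigl(\bm v_r-\mathbb{E}[\bm v_r]\bigr),\quad \bm v_r:=\bigl(\lvert\bm a_r^*\bm z\rvert^2-y_r\bigr)(\bm a_r^*\bm z)\bm a_r,
\]
and reduce the vector norm to a scalar problem via an $\epsilon$-net: fixing a $1/2$-net $\mathcal N$ of the unit sphere of $\mathbb{C}^n$ (so $\lvert\mathcal N\rvert\le 5^{2n}$, since $\mathbb{C}^n\cong\mathbb{R}^{2n}$), one has $\norm{\bm w}\le 2\max_{\bm u\in\mathcal N}\lvert\bm u^*\bm w\rvert$. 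For a \emph{fixed} unit vector $\bm u$, the scalar $\bm u^*\bm v_r=(\lvert\bm a_r^*\bm z\rvert^2-y_r)(\bm a_r^*\bm z)(\bm u^*\bm a_r)$ is a degree-four polynomial in the Gaussian entries of $\bm a_r$, hence sub-Weibull with heavy (non sub-exponential) tails. I would control these by \textbf{truncation}: on the high-probability event that $\max_r\lvert\bm a_r^*\bm z\rvert\lesssim\sqrt{\log m}\,\norm{\bm z}$, $\max_r\lvert\bm a_r^*\bm x\rvert\lesssim\sqrt{\log m}\,\norm{\bm x}$ and $\max_r\lvert\bm u^*\bm a_r\rvert\lesssim\sqrt{\log m}$ (a union bound over the $m$ samples costs the $4m/n^4$ term), each $\bm u^*\bm v_r$ is bounded, and a Bernstein inequality for the truncated average gives $\mathbb{P}\bigl(\lvert\bm u^*\bm w\rvert>t\bigr)\le 2e^{-\gamma m}$ at the right scale of $t$. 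A union bound over $\mathcal N$ is absorbed once $m\gtrsim n$, producing the $20e^{-\gamma m}$ term. For the CDP model the terms are independent only across the $L$ masks, so I would group them by pattern and run the same scheme pattern-wise, obtaining the polynomial failure probability $(4L+2)/n^3$.

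The hard part is \textbf{calibrating the accuracy $t$ to the size of the mean}, so that the output is the \emph{relative} bound $\norm{\bm w}\le\frac\delta2\norm{\mathbb{E}[\nabla f(\bm z)]}$ rather than a crude absolute one. Indeed, the naive matrix route $\bm w=(M_1-\mathbb{E} M_1)\bm z-(M_2-\mathbb{E} M_2)\bm z$, with $M_1=\frac1m\sum_r\lvert\bm a_r^*\bm z\rvert^2\bm a_r\bm a_r^*$ and $M_2=\frac1m\sum_r y_r\bm a_r\bm a_r^*$, only yields $\norm{\bm w}\lesssim\delta(\norm{\bm z}^2+\norm{\bm x}^2)\norm{\bm z}$, which can be far larger than $\norm{\mathbb{E}[\nabla f(\bm z)]}$ wherever the latter is small. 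Obtaining the relative statement therefore requires exploiting the cancellation defining $\mathbb{E}[\nabla f(\bm z)]=(2\norm{\bm z}^2-\norm{\bm x}^2)\bm z-(\bm x^*\bm z)\bm x$ directly in the scalar concentration, i.e.\ taking $t=\tfrac\delta4\norm{\mathbb{E}[\nabla f(\bm z)]}$ and checking that the sub-exponential scale of $\bm u^*(\bm v_r-\mathbb{E}\bm v_r)$ is compatible with this target. I expect this calibration, together with the bookkeeping of constants, to be the principal technical obstacle.

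Finally, to pass from the first inequality to the second I would argue deterministically. Assume $\dist(\bm z,\bm x)\le\norm{\bm x}/2$. Rotating by the optimal phase $\phi(\bm z)$, I may take $\bm x^*\bm z\in\mathbb{R}$ (this is the relation $\im(\bm h^*\bm x e^{i\phi(\bm z)})=0$ noted before Lemma~\ref{lem:1}) and write $\bm z=\bm x+\bm h$ with $\norm{\bm h}=\dist(\bm z,\bm x)$. Since $\bm x$ is a critical point of the expected objective, $\mathbb{E}[\nabla f(\bm x)]=\bm 0$, so expanding the cubic map $\mathbb{E}[\nabla f(\cdot)]$ about $\bm x$ gives $\mathbb{E}[\nabla f(\bm z)]=\norm{\bm x}^2\bm h+3(\bm x^*\bm h)\bm x+\bigl(4\re(\bm x^*\bm h)\bm h+2\norm{\bm h}^2\bm x\bigr)+2\norm{\bm h}^2\bm h$, where I used $\bm x^*\bm h\in\mathbb{R}$ to combine the linear terms. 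Bounding term by term and using $\norm{\bm h}\le\norm{\bm x}/2$ yields $\norm{\mathbb{E}[\nabla f(\bm z)]}\le 8\,\dist(\bm z,\bm x)\norm{\bm x}^2$. Combining this with the first inequality on the same event gives $\norm{\bm w}\le\frac\delta2\cdot 8\,\dist(\bm z,\bm x)\norm{\bm x}^2=4\delta\,\dist(\bm z,\bm x)\norm{\bm x}^2$, as claimed.
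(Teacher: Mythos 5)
Your treatment of the second inequality is correct and is essentially the paper's: a deterministic expansion of $\mathbb{E}[\nabla f(\bm{z})]$ about $\bm{x}$ (after the optimal phase rotation) giving $\norm{\mathbb{E}[\nabla f(\bm{z})]}\leq 8\dist(\bm{z},\bm{x})\norm{\bm{x}}^2$ when $\dist(\bm{z},\bm{x})\leq\norm{\bm{x}}/2$, combined with \eqref{eq:23} on the same event. The problem is the first inequality, which is the heart of the lemma and which you do not actually prove. You set up a net--truncation--Bernstein scheme and then explicitly defer the decisive step --- calibrating the deviation to the scale $\frac{\delta}{2}\norm{\mathbb{E}[\nabla f(\bm{z})]}$, a quantity that can be arbitrarily small compared with $\norm{\bm{x}}^3$ --- calling it ``the principal technical obstacle.'' That obstacle is exactly what a proof of \eqref{eq:23} must overcome, so as written there is a genuine gap. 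The gap is worse for the CDP model: the rows within a single diffraction pattern are deterministically coupled through the DFT, so ``running the same scheme pattern-wise'' would force you to rebuild the delicate CDP concentration estimates from scratch rather than inherit them.

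The missing idea --- and the way the paper closes the gap with \emph{no new probabilistic input} --- is to center at the solution before doing any concentration. Write $\bm{z}=\bm{x}+t\bm{h}$ with $\norm{\bm{h}}=1$ and $t=\dist(\bm{z},\bm{x})$. Since $\lvert\bm{a}_r^*\bm{z}\rvert^2-y_r=2t\re\bigl(\overline{\bm{a}_r^*\bm{x}}\,\bm{a}_r^*\bm{h}\bigr)+t^2\lvert\bm{a}_r^*\bm{h}\rvert^2$, every term of $\langle\bm{u},\nabla f(\bm{z})\rangle$ carries a factor $t$, $t^2$ or $t^3$, and the coefficient of each power of $t$ is a quadratic form $\bm{u}^*M\bm{v}$ in which $M$ is a Hessian-block-type random matrix built from the \emph{fixed} vectors $\bm{x}$ or $\bm{h}$, e.g.\ $\frac1m\sum_r\lvert\bm{a}_r^*\bm{x}\rvert^2\bm{a}_r\bm{a}_r^*$ or $\frac1m\sum_r(\bm{a}_r^*\bm{h})^2\bm{a}_r\bm{a}_r^T$. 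Expanding $\langle\bm{u},\mathbb{E}[\nabla f(\bm{z})]\rangle$ the same way produces the expectations of exactly these matrices, so the deviation becomes a sum of five terms of the form $t^j\,\bm{u}^*(M-\mathbb{E}M)\bm{v}$, each bounded through Lemma~\ref{lem:4} (applied once at $\bm{x}$ and once at the unit vector $\bm{h}$ --- this doubling is where the stated probabilities, with their doubled constants, come from). Summing the bounds gives $2t\delta\norm{\bm{x}}^2+3t^2\delta\norm{\bm{x}}+t^3\delta=\frac{\delta}{2}\cdot 2t\bigl(t^2+3t\norm{\bm{x}}+2\norm{\bm{x}}^2\bigr)=\frac{\delta}{2}\norm{\mathbb{E}[\nabla f(\bm{z})]}$: the relative bound falls out automatically because the fluctuation and the mean both vanish linearly in $t$. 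Your own observation that the ``naive matrix route'' destroys the cancellation is accurate, but the remedy is this change of variables and reduction to the already-available Hessian concentration, not a freshly calibrated scalar Bernstein argument.
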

\begin{lem}
\label{lem:10}
 In the setup of Lemma~\ref{lem:4}, then $\mathbb{E}[\nabla
 f(\bm{z})]$ and $\bm{h}$ share the same direction and
 $\mathbb{E}[\nabla f(\bm{z})]=\bm{0}$ if and only if $\bm{z}$ is a
 solution of the phase retrieval problem \eqref{eq:1}, which holds
 with high probability. Furthermore, the angle between $\nabla f(\bm{z})$ and
  $\bm{z}-\bm{x}e^{i\phi(\bm{z})}$ is below $\arcsin(\delta/2)$, i.e., the
  following 
  \begin{equation*}
    \re\left\langle \nabla
      f(\bm{z}),\bm{z}-\bm{x}e^{i\phi(\bm{z})}\right\rangle\geq \sqrt{1-\frac{\delta^2}{4}}
    \norm{\nabla f(\bm{z})}\norm{\bm{z}-\bm{x}e^{i\phi(\bm{z})}}
  \end{equation*}
holds with high probability.
\end{lem}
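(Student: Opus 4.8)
The plan is to split the empirical gradient as $\nabla f(\bm{z}) = \mathbb{E}[\nabla f(\bm{z})] + \bm{r}$ with $\bm{r} := \nabla f(\bm{z}) - \mathbb{E}[\nabla f(\bm{z})]$, and to reduce all the assertions to one deterministic fact: that the population gradient $\mathbb{E}[\nabla f(\bm{z})]$ is a nonnegative real multiple of $\bm{h}$. Granting this alignment, the zero-set characterization becomes a short algebraic computation, while the angle estimate becomes planar geometry driven by the relative fluctuation bound $\norm{\bm{r}} \le \tfrac{\delta}{2}\norm{\mathbb{E}[\nabla f(\bm{z})]}$ of Lemma~\ref{lem:9}, which holds for every fixed $\bm{z}$ independent of the sampling vectors.

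First I would analyze the population gradient through Lemma~\ref{lem:8}, writing $\mathbb{E}[\nabla f(\bm{z})] = (2\norm{\bm{z}}^2 - \norm{\bm{x}}^2)\bm{z} - (\bm{x}^*\bm{z})\bm{x}$ and re-expressing it in the frame $\{\bm{z} - e^{i\phi(\bm{z})}\bm{x},\, e^{i\phi(\bm{z})}\bm{x}\}$. Here the defining optimality of $\phi(\bm{z})$ enters as the phase relation $\bm{x}^*\bm{z} = \lvert\bm{x}^*\bm{z}\rvert\,e^{i\phi(\bm{z})}$ (the same fact behind $\im(\bm{h}^*\bm{x}e^{i\phi(\bm{z})}) = 0$), and I would verify that after substitution the component transverse to $\bm{h}$ cancels, leaving $\mathbb{E}[\nabla f(\bm{z})] = \lambda(\bm{z})\,\bm{h}$ with $\lambda(\bm{z}) \ge 0$. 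For the zero set I would set $(2\norm{\bm{z}}^2 - \norm{\bm{x}}^2)\bm{z} = (\bm{x}^*\bm{z})\bm{x}$, pair it against $\bm{x}$ and against $\bm{z}$, and use the phase relation to force $\bm{z}$ collinear with $\bm{x}$ and then $\norm{\bm{z}} = \norm{\bm{x}}$, i.e. $\bm{z} \in \chi$; the converse follows by inserting $\bm{z} = e^{i\theta}\bm{x}$.

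The angle bound is then elementary. Put $\bm{g} := \mathbb{E}[\nabla f(\bm{z})] = \lambda\bm{h}$ with $\lambda = \norm{\bm{g}}$, so by Lemma~\ref{lem:9} the point $\nabla f(\bm{z})$ lies in the ball of radius $\tfrac{\delta}{2}\lambda$ centred at $\bm{g}$. The half-angle this ball subtends at the origin is $\arcsin\!\bigl(\tfrac{(\delta/2)\lambda}{\lambda}\bigr) = \arcsin(\delta/2)$, so the angle between $\nabla f(\bm{z})$ and $\bm{g}$, equivalently between $\nabla f(\bm{z})$ and $\bm{h}$, is at most $\arcsin(\delta/2)$. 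Taking cosines gives $\re\langle \nabla f(\bm{z}), \bm{h}\rangle \ge \sqrt{1 - \delta^2/4}\,\norm{\nabla f(\bm{z})}$, and multiplying by $N := \norm{\bm{z} - e^{i\phi(\bm{z})}\bm{x}}$, using $\bm{z} - e^{i\phi(\bm{z})}\bm{x} = N\bm{h}$, yields the stated inequality.

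The main obstacle is the deterministic alignment of $\mathbb{E}[\nabla f(\bm{z})]$ with $\bm{h}$. Near the solution this gradient behaves like $\mathbb{E}[\nabla^2 f(\bm{x})]$ applied to the error, and by Lemma~\ref{lem:3} that operator is far from a multiple of the identity (eigenvalues $4\norm{\bm{x}}^2$ and $\norm{\bm{x}}^2$), so collinearity with $\bm{h}$ is exactly where the phase-optimality of $\phi(\bm{z})$ must be exploited most carefully, and where the component of $\bm{z}$ transverse to $\bm{x}$ demands attention. I also expect to have to exclude the spurious stationary points of the population objective (such as $\bm{z} = \bm{0}$ and vectors $\bm{z} \perp \bm{x}$ with $\norm{\bm{z}} = \norm{\bm{x}}/\sqrt{2}$, which also annihilate $\mathbb{E}[\nabla f(\bm{z})]$) by confining the argument to the basin $\dist(\bm{z},\bm{x}) \le \norm{\bm{x}}/2$. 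Once the alignment is established there, the concentration and geometry steps are routine.
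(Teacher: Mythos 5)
Your architecture is the same as the paper's (population gradient from Lemma~\ref{lem:8}, relative concentration from Lemma~\ref{lem:9}, then the ball-subtends-angle picture), but the step you plan to ``verify'' --- that the component of $\mathbb{E}[\nabla f(\bm{z})]$ transverse to $\bm{h}$ cancels --- is false, and no amount of care with the phase optimality will make it true. Phase optimality of $\phi(\bm{z})$ gives only that $\bm{x}^*\bm{h}$ is \emph{real}; it does not constrain its size. Writing WLOG $\phi(\bm{z})=0$, $\bm{z}=\bm{x}+t\bm{h}$ with $c:=\bm{x}^*\bm{h}\in\mathbb{R}$, Lemma~\ref{lem:8} gives
\begin{equation*}
\mathbb{E}[\nabla f(\bm{z})]
= t(3c+2t)\,\bm{x} + t\bigl(\norm{\bm{x}}^2+4tc+2t^2\bigr)\bm{h},
\end{equation*}
so the piece along $\bm{x}$ survives unless $t=0$, $3c+2t=0$, or $\bm{z}$ is collinear with $\bm{x}$. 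Concretely, take real unit vectors $\bm{x}\perp\bm{h}$ and $\bm{z}=\bm{x}+t\bm{h}$: then $\mathbb{E}[\nabla f(\bm{z})]=2t^2\bm{x}+t(1+2t^2)\bm{h}$, whose angle with $\bm{h}$ is $\arctan\bigl(2t/(1+2t^2)\bigr)$. This does not vanish in your basin $\dist(\bm{z},\bm{x})\le\norm{\bm{x}}/2$ --- at $t=1/2$ the angle is $\arctan(2/3)\approx 34^{\circ}$, far above $\arcsin(\delta/2)$ for the small $\delta$ the downstream theorem requires --- so restricting to the basin removes the spurious zeros but cannot repair the alignment. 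Worse, since by Lemma~\ref{lem:9} the empirical gradient lies within relative distance $\delta/2$ of this \emph{misaligned} expectation, the angle between $\nabla f(\bm{z})$ and $\bm{z}-\bm{x}$ is, with high probability, at least $\arctan\bigl(2t/(1+2t^2)\bigr)-\arcsin(\delta/2)$; for $t$ moderately larger than $\delta$ this exceeds $\arcsin(\delta/2)$, so the stated inequality itself fails, not merely your route to it.

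For context: this is exactly the paper's own gap, which your proposal inherits. The paper's proof of Lemma~\ref{lem:10} invokes the identity $\mathbb{E}[\nabla f(\bm{z})]=2t(t^2+3t\norm{\bm{x}}+2\norm{\bm{x}}^2)\bm{h}$ established in the proof of Lemma~\ref{lem:9} under the parametrization ``$\re(\bm{x}^*\bm{h})=\norm{\bm{x}}$, $\norm{\bm{h}}=1$''; by Cauchy--Schwarz that forces $\bm{h}=\bm{x}/\norm{\bm{x}}$, i.e.\ the identity (and hence the alignment, the ``if and only if'' claim, and the angle bound) is proved only for $\bm{z}$ on the ray through $\bm{x}$, where alignment is trivial. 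Your instinct that the alignment is the crux is exactly right, and your spurious critical points ($\bm{z}=\bm{0}$, and $\bm{z}\perp\bm{x}$ with $\norm{\bm{z}}=\norm{\bm{x}}/\sqrt{2}$) already disprove the unrestricted ``iff''. But a correct version of this lemma must either restrict $\bm{z}$ to be collinear with the signal, or replace exact alignment by a quantitative angle bound between $\mathbb{E}[\nabla f(\bm{z})]$ and $\bm{h}$ that degrades with $\dist(\bm{z},\bm{x})$ and propagates a different constant into the conclusion; as written, your plan cannot be completed.
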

\begin{remark}
\label{remark:2}
  From Lemma~\ref{lem:10}, $\nabla f(\bm{z})=\bm{0}$ if and only if
  $\bm{z}=\bm{0}$ or $\bm{z}$ is a
 solution of the phase retrieval problem \eqref{eq:1} with high
 probability. So all local minimizers are global minimizers with high probability, which
 facilitates the line search methods. 
\end{remark}
\begin{theorem}[Global Convergence]
\label{thm:main}
  Let $\bm{x}$ be a solution to the generalized phase retrieval
  problem~\eqref{eq:1} and the number of samples $m$ obeys $m\geq
  c_1(\delta)n\log n$ in Gaussian model or the number of patterns obeys $L\geq
  c_2(\delta)\log^3n$ in the CDP model, where $c_1,c_2$ are sufficiently large
  numerical constants. For the following
  gradient descent updating scheme ($\bm{z}_0\neq 0$)
  \begin{equation*}
    \bm{z}_{k+1}=\bm{z}_k-\alpha_k\nabla f(\bm{z}_k),
  \end{equation*}
the (strict) descent property
\begin{equation*}
  \dist(\bm{z}_{k+1},\bm{x})< \dist(\bm{z}_{k},\bm{x}) 
\end{equation*}
holds with probability at least $1-20e^{-\gamma m}-4m/n^4$ in Gaussian model and
  $1-(4L+2)/n^3$ in CDP model, if the stepsize $\alpha_k$
  satisfies
\begin{multline*}
 0<\alpha_k\\<\operatorname{min}\left\{2\sqrt{1-\frac{\delta^2}{4}}\frac{\norm{\nabla f(\bm{z}_k)}}{6(2+\delta)\norm{\bm{x}}^2},2\sqrt{1-\frac{\delta^2}{4}}\sqrt[3]{\frac{\norm{\nabla f(\bm{z}_k)}}{6(2+\delta)}}\right\}.
\end{multline*}
Furthermore, we denote the intersection of the polynomial of
degree three $P_1(t)=(2+\delta)t(t^2+3t\norm{\bm{x}}+2\norm{\bm{x}}^2)$ and
$y=\norm{\nabla f(\bm{z}_k)}$ as $t_1$ and take stepsize
$\alpha_k=\sqrt{1-\delta^2/4}t_1$, then the geometrical convergence is
ensured, that is
\begin{equation*}
  \dist(\bm{z}_{k+1},\bm{x})\leq\left(\frac{2\sqrt{2}\delta}{2-\delta}+\frac{\delta}{\sqrt{2}}\right)\dist(\bm{z}_{k},\bm{x}).
\end{equation*}
\end{theorem}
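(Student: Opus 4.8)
The plan is to measure progress by the squared distance to the solution set and to exploit the fact that the optimal phase can only help: since $\phi(\bm{z}_{k+1})$ minimizes $\norm{\bm{z}_{k+1}-e^{i\phi}\bm{x}}$ over $\phi$, we have $\dist(\bm{z}_{k+1},\bm{x})\leq\norm{\bm{z}_{k+1}-e^{i\phi(\bm{z}_k)}\bm{x}}$, so I may freeze the reference phase at the previous value $\phi_k:=\phi(\bm{z}_k)$. Writing $\bm{d}_k:=\bm{z}_k-e^{i\phi_k}\bm{x}$ (so $\norm{\bm{d}_k}=\dist(\bm{z}_k,\bm{x})$) and inserting $\bm{z}_{k+1}=\bm{z}_k-\alpha_k\nabla f(\bm{z}_k)$ gives the elementary bound
\begin{equation*}
\dist(\bm{z}_{k+1},\bm{x})^2\leq \dist(\bm{z}_k,\bm{x})^2-2\alpha_k\re\left\langle\nabla f(\bm{z}_k),\bm{d}_k\right\rangle+\alpha_k^2\norm{\nabla f(\bm{z}_k)}^2.
\end{equation*}
A sufficient condition for strict descent is that the perturbation be negative, i.e. $0<\alpha_k<2\re\langle\nabla f(\bm{z}_k),\bm{d}_k\rangle/\norm{\nabla f(\bm{z}_k)}^2$, and the whole problem reduces to making this interval explicit and nonempty.

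For the lower endpoint I would invoke Lemma~\ref{lem:10}, which gives $\re\langle\nabla f(\bm{z}_k),\bm{d}_k\rangle\geq\sqrt{1-\delta^2/4}\,\norm{\nabla f(\bm{z}_k)}\dist(\bm{z}_k,\bm{x})$, so the interval is respected once the displacement obeys $\alpha_k\norm{\nabla f(\bm{z}_k)}<2\sqrt{1-\delta^2/4}\,\dist(\bm{z}_k,\bm{x})$. The remaining difficulty is that $\dist(\bm{z}_k,\bm{x})$ is unknown and must be replaced by a computable surrogate. Here I would combine Lemma~\ref{lem:8} with Lemma~\ref{lem:4} and Lemma~\ref{lem:9}: parametrising $\bm{z}_k=e^{i\phi_k}\bm{x}+t\bm{h}$ with $t=\dist(\bm{z}_k,\bm{x})$ and $\rho:=\re(\bm{h}^*\bm{x}e^{i\phi_k})$, a direct computation of $\mathbb{E}[\nabla f(\bm{z}_k)]$ collapses (because it is parallel to $\bm{h}$) to $\norm{\mathbb{E}[\nabla f(\bm{z}_k)]}=t(2t^2+6t\rho+3\rho^2+\norm{\bm{x}}^2)$; bounding $\rho\leq\norm{\bm{x}}$ and folding in the concentration factor $(1+\delta/2)$ yields $\norm{\nabla f(\bm{z}_k)}\leq P_1(t)=(2+\delta)t(t^2+3t\norm{\bm{x}}+2\norm{\bm{x}}^2)$. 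Since $P_1$ is increasing, the intersection point $t_1=P_1^{-1}(\norm{\nabla f(\bm{z}_k)})$ satisfies $t_1\leq\dist(\bm{z}_k,\bm{x})$, which is exactly the surrogate needed. Splitting the cubic according to the regimes $t_1\lessgtr\norm{\bm{x}}$ and discarding the non-dominant terms bounds $t_1$ below by the smaller of $\norm{\nabla f(\bm{z}_k)}/(6(2+\delta)\norm{\bm{x}}^2)$ and $(\norm{\nabla f(\bm{z}_k)}/(6(2+\delta)))^{1/3}$; multiplying by $2\sqrt{1-\delta^2/4}$ reproduces precisely the stated stepsize window.

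For the quantitative contraction I would pass to the local regime $\dist(\bm{z}_k,\bm{x})\leq\norm{\bm{x}}/2$, where the sharper estimate of Lemma~\ref{lem:9}, $\norm{\nabla f(\bm{z}_k)-\mathbb{E}[\nabla f(\bm{z}_k)]}\leq 4\delta\,\dist(\bm{z}_k,\bm{x})\norm{\bm{x}}^2$, is available. Decomposing $\nabla f(\bm{z}_k)=c_k\bm{h}+\bm{e}_k$ with $c_k=\norm{\mathbb{E}[\nabla f(\bm{z}_k)]}$ (Lemma~\ref{lem:10} makes the mean an $\bm{h}$-multiple) and $\norm{\bm{e}_k}\leq 4\delta t\norm{\bm{x}}^2$, the frozen-phase residual splits as $\bm{d}_k-\alpha_k\nabla f(\bm{z}_k)=(t-\alpha_k c_k)\bm{h}-\alpha_k\bm{e}_k$, whence $\dist(\bm{z}_{k+1},\bm{x})\leq|t-\alpha_k c_k|+\alpha_k\norm{\bm{e}_k}$. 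The stepsize $\alpha_k=\sqrt{1-\delta^2/4}\,t_1$ is calibrated so that $\alpha_k c_k$ almost cancels $t$: the concentration bound $\bigl\lvert\norm{\nabla f(\bm{z}_k)}-c_k\bigr\rvert\leq\tfrac{\delta}{2}c_k$ confines the ratio $c_k/\norm{\nabla f(\bm{z}_k)}$ to $[2/(2+\delta),2/(2-\delta)]$, which is the origin of the $2-\delta$ denominator, while the fluctuation term $\alpha_k\norm{\bm{e}_k}$ supplies the remaining $O(\delta)$ piece. Tracking the two contributions through these ratios delivers the contraction factor $\tfrac{2\sqrt{2}\delta}{2-\delta}+\tfrac{\delta}{\sqrt{2}}$.

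I expect the stepsize calibration to be the crux. The clean cancellation $\alpha_k c_k\approx t$ would require the unknown $t=\dist(\bm{z}_k,\bm{x})$, and the only legitimate proxy is $t_1$, which is defined through $\norm{\nabla f(\bm{z}_k)}$ and merely satisfies $t_1\leq t$; quantifying how far $t_1$ lags $t$, and simultaneously controlling the free parameter $\rho\in[-\norm{\bm{x}},\norm{\bm{x}}]$ that enters $c_k$ but not the surrogate, is where the genuine work and the precise numerical constants reside. Everything else is the frozen-phase reduction together with bookkeeping on the cubic $P_1$.
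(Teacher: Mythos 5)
Your descent-window argument follows the paper's route (frozen phase, the angle bound of Lemma~\ref{lem:10}, the cubic surrogate $t_1$), but your own, more careful, starting point cannot reach the stated window, and this is a genuine gap rather than bookkeeping. From the expansion of the square, strict descent requires $0<\alpha_k<2\re\langle\nabla f(\bm{z}_k),\bm{d}_k\rangle/\norm{\nabla f(\bm{z}_k)}^2$, which via Lemma~\ref{lem:10} becomes $\alpha_k\norm{\nabla f(\bm{z}_k)}<2\sqrt{1-\delta^2/4}\,\dist(\bm{z}_k,\bm{x})$, exactly as you write. But the window to be certified is $\alpha_k<2\sqrt{1-\delta^2/4}\,m_0$ with $m_0=\operatorname{min}\bigl\{\norm{\nabla f(\bm{z}_k)}/(6(2+\delta)\norm{\bm{x}}^2),\sqrt[3]{\norm{\nabla f(\bm{z}_k)}/(6(2+\delta))}\bigr\}$, and all your surrogate chain yields is $m_0\le t_1\le\dist(\bm{z}_k,\bm{x})$. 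That caps $\alpha_k$ by $2\sqrt{1-\delta^2/4}\,\dist(\bm{z}_k,\bm{x})$, not by $2\sqrt{1-\delta^2/4}\,\dist(\bm{z}_k,\bm{x})/\norm{\nabla f(\bm{z}_k)}$; an uncancelled factor of $\norm{\nabla f(\bm{z}_k)}$ remains, and the implication holds only when $\norm{\nabla f(\bm{z}_k)}\le 1$. So ``multiplying by $2\sqrt{1-\delta^2/4}$ reproduces precisely the stated stepsize window'' is a non sequitur. (The paper's proof closes this algebra only because it writes the descent criterion as $\alpha_k<2\re\langle\cdot,\cdot\rangle/\norm{\nabla f(\bm{z}_k)}$, i.e.\ it silently treats $\alpha_k$ as the length of a step along the \emph{normalized} gradient; your correct criterion exposes that inconsistency rather than resolving it.)

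The same scaling mismatch breaks your contraction argument at the point you yourself call the crux. You need $\alpha_k c_k\approx t$ with $c_k=\norm{\mathbb{E}[\nabla f(\bm{z}_k)]}$, which is of size $2t(t^2+3t\norm{\bm{x}}+2\norm{\bm{x}}^2)$, while $\alpha_k=\sqrt{1-\delta^2/4}\,t_1$ with $t_1\le t$; hence $\alpha_k c_k$ carries an extra factor of order $t_1(t^2+3t\norm{\bm{x}}+2\norm{\bm{x}}^2)$ relative to $t$, and no bookkeeping on the ratio $c_k/\norm{\nabla f(\bm{z}_k)}\in[2/(2+\delta),2/(2-\delta)]$ can manufacture the asserted near-cancellation. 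The paper proves the contraction by a genuinely different, purely geometric route that never decomposes $\nabla f$ into mean plus fluctuation: in the plane spanned by $\bm{h}$ and $\nabla f(\bm{z}_k)$ it places the solution at $O$ and the iterate at $T$, brackets $t\in[t_1,t_2]$ by the two cubics $P_1,P_2$ (both sides of \eqref{eq:23}), and bounds the new distance $\lvert ST\rvert$ against $\lvert OT_1\rvert$ via $\sin\beta\le\delta/2$ together with the convexity estimate $(t-t_1)/t_1\le 2\delta/(2-\delta)$; your split could in principle replace that geometry, but only under the normalized-gradient reading of the update, where the quantities you want to cancel live on the same scale. A final caveat you flag but do not repair: the parallelism $\mathbb{E}[\nabla f(\bm{z}_k)]\parallel\bm{h}$ that you import from Lemma~\ref{lem:10} to identify $c_k$ is false for general $\rho$, since $\mathbb{E}[\nabla f(\bm{z})]=t(2t^2+6t\rho+3\rho^2+\norm{\bm{x}}^2)\bm{h}+t(3\rho+2t)\bigl(e^{i\phi(\bm{z})}\bm{x}-\rho\bm{h}\bigr)$, whose orthogonal component vanishes only when $\rho=\norm{\bm{x}}$ (the ray case to which the paper implicitly restricts) or $3\rho+2t=0$. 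Controlling $\rho$ is therefore not deferred polish; it is exactly where your identification of $c_k$, and with it the contraction estimate, breaks down.
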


It is inconvenient to determine $\alpha_k$ by finding a root of
function, we can relax the $\alpha_k$ in two cases. 
\begin{cor}
\label{cor:cor}
  In the setup of Theorem~\ref{thm:main}, if
  $\dist(\bm{z_k},\bm{x})\leq \norm{\bm{x}}/5$, then
  $\alpha_k$ can be taken as $\sqrt{1-\delta^2/4}\frac{25\norm{\nabla
      f(\bm{z}_k)}}{66(2+\delta)\norm{\bm{x}}^2}$. we have 
  \begin{equation*}
    \dist(\bm{z}_{k+1},\bm{x})\leq\left(\frac{\sqrt{2}(58\delta+16)}{25(2-\delta)}+\frac{\delta}{\sqrt{2}}\right)\dist(\bm{z}_{k},\bm{x});
  \end{equation*}
If $\dist(\bm{z_k},\bm{x})\geq 2\norm{\bm{x}}$, and $\alpha_k=\sqrt{1-\delta^2/4}\sqrt[3]{\frac{\norm{\nabla
      f(\bm{z}_k)}}{3(2+\delta)}}$, we have
\begin{equation*}
  \dist(\bm{z}_{k+1},\bm{x})\leq\left(\frac{\sqrt{2}(4\delta+4)}{3(2-\delta)}+\frac{\delta}{\sqrt{2}}\right)\dist(\bm{z}_{k},\bm{x}).
\end{equation*}
\end{cor}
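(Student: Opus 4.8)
The plan is to read the Corollary as two explicit specializations of Theorem~\ref{thm:main}: the only quantity that is awkward to evaluate there is the root $t_1$ of the cubic equation $P_1(t)=\norm{\nabla f(\bm{z}_k)}$, and the Corollary merely replaces $t_1$ by closed‑form lower bounds that are valid in the two distance regimes $\dist(\bm{z}_k,\bm{x})\leq\norm{\bm{x}}/5$ and $\dist(\bm{z}_k,\bm{x})\geq 2\norm{\bm{x}}$. Throughout I set $d=\dist(\bm{z}_k,\bm{x})$, $g=\norm{\nabla f(\bm{z}_k)}$ and $c=\sqrt{1-\delta^2/4}$, and I use the fact, established in the proof of Theorem~\ref{thm:main} from Lemmas~\ref{lem:8}--\ref{lem:10}, that $P_1$ is strictly increasing on $[0,\infty)$ with $P_1(0)=0$ and that $g\leq P_1(d)$ (this is the upper estimate $\norm{\nabla f(\bm{z}_k)}\leq(1+\delta/2)\norm{\mathbb{E}[\nabla f(\bm{z}_k)]}$ combined with the explicit form of $\mathbb{E}[\nabla f]$). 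Monotonicity then yields uniqueness of $t_1$ and the comparison $t_1\leq d$, which is the engine for the near regime.

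First I would handle the near regime $d\leq\norm{\bm{x}}/5$ by turning $t_1\leq d$ into a bound on the cubic factor. Since $t_1\leq d\leq\norm{\bm{x}}/5$,
\[
t_1^2+3t_1\norm{\bm{x}}+2\norm{\bm{x}}^2\leq\tfrac{1}{25}\norm{\bm{x}}^2+\tfrac{3}{5}\norm{\bm{x}}^2+2\norm{\bm{x}}^2=\tfrac{66}{25}\norm{\bm{x}}^2 ,
\]
so from $P_1(t_1)=g$ I get $g\leq(2+\delta)t_1\cdot\tfrac{66}{25}\norm{\bm{x}}^2$, i.e.\ $t_1\geq\frac{25g}{66(2+\delta)\norm{\bm{x}}^2}$. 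This is exactly the quantity $\alpha_k/c$ prescribed in the Corollary, so the chosen $\alpha_k\leq c\,t_1$ is an admissible (merely smaller) descent stepsize. For the far regime $d\geq 2\norm{\bm{x}}$ the cubic term of $P_1$ dominates: the proposed $\alpha_k=c\sqrt[3]{g/(3(2+\delta))}$ satisfies $\alpha_k\leq c\,t_1$ precisely when $\sqrt[3]{g/(3(2+\delta))}\leq t_1$, and clearing the cube root reduces this to $2t_1^2\geq 3t_1\norm{\bm{x}}+2\norm{\bm{x}}^2$, i.e.\ $(2t_1+\norm{\bm{x}})(t_1-2\norm{\bm{x}})\geq 0$, i.e.\ $t_1\geq 2\norm{\bm{x}}$; I would secure this from a matching lower bound on $g$ in the far regime (again via Lemma~\ref{lem:8}) that forces $t_1\geq 2\norm{\bm{x}}$.

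With an admissible stepsize in hand in each regime, I would obtain the contraction estimate by substituting it into the per‑step descent inequality from the proof of Theorem~\ref{thm:main}. Writing $\bm{v}=\bm{z}_k-\bm{x}e^{i\phi(\bm{z}_k)}$ and decomposing $\nabla f(\bm{z}_k)$ into its component $\mu\,\bm{v}/d$ along $\bm{v}$ and its orthogonal part, Lemma~\ref{lem:10} gives $\mu=\re\langle\nabla f(\bm{z}_k),\bm{v}\rangle/d\geq c\,g$ while the orthogonal part is bounded by $\tfrac{\delta}{2}g$, so that
\[
\dist(\bm{z}_{k+1},\bm{x})\leq\bigl|\,d-\alpha_k\mu\,\bigr|+\alpha_k\tfrac{\delta}{2}g .
\]
Plugging in the two explicit stepsizes and using the regime restriction on $d$ once more to estimate $\mu$ and $g$ collapses the right‑hand side into the stated closed forms $\bigl(\frac{\sqrt{2}(58\delta+16)}{25(2-\delta)}+\frac{\delta}{\sqrt{2}}\bigr)d$ and $\bigl(\frac{\sqrt{2}(4\delta+4)}{3(2-\delta)}+\frac{\delta}{\sqrt{2}}\bigr)d$ respectively.

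I expect the bookkeeping of these constants to be the main obstacle rather than any conceptual difficulty. The replacement stepsizes are deliberately sub‑optimal relative to the near‑optimal choice $c\,t_1$, so one must verify that the cross term $-\alpha_k\mu$ still dominates both the overshoot $|d-\alpha_k\mu|$ and the orthogonal contribution $\alpha_k\tfrac{\delta}{2}g$, keeping the contraction factor strictly below $1$; this is precisely where the coefficients $25/66$ and $3$ and the $\sqrt{2}$ factors must be reconciled against the regime bounds on $d$. A secondary obstacle, already flagged above, is confirming admissibility in the far regime, i.e.\ producing the lower bound on $g$ (equivalently $t_1\geq 2\norm{\bm{x}}$) that legitimizes the cube‑root stepsize when $d\geq 2\norm{\bm{x}}$.
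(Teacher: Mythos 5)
Your reading of the corollary and your near-regime argument are essentially the paper's own proof: the paper likewise derives the lower bound $\frac{25\norm{\nabla f(\bm{z}_k)}}{66(2+\delta)\norm{\bm{x}}^2}$ from the estimate $t^2+3t\norm{\bm{x}}+2\norm{\bm{x}}^2\leq\frac{66}{25}\norm{\bm{x}}^2$ (it bounds the true distance $t=\dist(\bm{z}_k,\bm{x})$ rather than the root $t_1$, which is all that is ever needed), and your parallel/orthogonal splitting of the step is an algebraic rewriting of the paper's Figure-1 computation $\lvert ST\rvert^2=(t-s)^2\cos^2\beta+t^2\sin^2\beta$. Be aware, however, that what you defer as ``bookkeeping'' is the actual content of the paper's proof: the stated constant arises from the convexity-based gap estimate $\frac{t-s}{s}\leq \frac{66(2+\delta)/25-(2-\delta)(s^2+3s+2)}{(2-\delta)(3s^2+6s+2)}\leq\frac{58\delta+16}{25(2-\delta)}$ (with $s$ the stepsize parameter and $\norm{\bm{x}}=1$), which combines the upper concentration bound defining $s$ with the lower bound $\norm{\nabla f(\bm{z}_k)}\geq(1-\delta/2)\norm{\mathbb{E}[\nabla f(\bm{z}_k)]}$ of Lemma~\ref{lem:9}; without carrying out this step you have no contraction factor at all.

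The genuine gap is your far regime. You correctly reduce admissibility of $\alpha_k=\sqrt{1-\delta^2/4}\,\sqrt[3]{\norm{\nabla f(\bm{z}_k)}/(3(2+\delta))}$ \emph{in your framework} to $t_1\geq 2\norm{\bm{x}}$, and then propose to obtain this from a lower bound on $g=\norm{\nabla f(\bm{z}_k)}$. That route cannot succeed: the only lower bound available from Lemmas~\ref{lem:8} and~\ref{lem:9} is $g\geq P_2(d)\geq P_2(2\norm{\bm{x}})=24(2-\delta)\norm{\bm{x}}^3$, whereas $t_1\geq 2\norm{\bm{x}}$ would require $g=P_1(t_1)\geq P_1(2\norm{\bm{x}})=24(2+\delta)\norm{\bm{x}}^3$; since $24(2-\delta)<24(2+\delta)$, the root $t_1$ of the \emph{upper} envelope can lie strictly below $2\norm{\bm{x}}$ even when $d\geq 2\norm{\bm{x}}$, so the inequality you need is simply not implied by the hypothesis. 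The repair — which is what the paper's omitted ``similar'' argument and its near-regime template do — is to compare the cube-root quantity against the true distance $d$ instead of against $t_1$: for $d\geq 2\norm{\bm{x}}$ one has $d^2+3d\norm{\bm{x}}+2\norm{\bm{x}}^2\leq 3d^2$, hence $g\leq P_1(d)\leq 3(2+\delta)d^3$, i.e.\ $d\geq\sqrt[3]{g/(3(2+\delta))}$. The comparison against $d$, not $t_1$, is also what the contraction estimate itself requires (the final step $\lvert ST\rvert\leq C\lvert OT_1\rvert\leq C\lvert OT\rvert$ needs the stepsize parameter to be at most $\lvert OT\rvert=d$), after which the same convexity estimate as above, now with $3(2+\delta)s^3$ in place of the linear majorant, yields the limit value $\frac{4\delta+4}{3(2-\delta)}$ appearing in the second bound.
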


\begin{remark}
  \begin{enumerate}
  \item Theorem~\ref{thm:main} tells us that the gradient descent algorithm with appropriate
stepsize converges to a solution of~\eqref{eq:1} when given enough
many measurements. It is an extension of the local convergence of
gradient descent method, see~\cite[Theorem
3.3]{Candes2015}. Our proof is more geometrical than the proof in literature~\cite{Candes2015}.
\item From the Lemma~\ref{lem:10}, for any nonzero vector $\bm{z}\in\mathbb{C}^n$
  outside the solution set $\chi$, $\nabla f(\bm{z})\neq 0$ hold with
  probability at least $1-20e^{-\gamma m}-4m/n^4$ in Gaussian model and
  $1-(4L+2)/n^3$ in CDP model. Thus, there is no saddle point of $f(\bm{z})$ with high
probability. When $m$
is large enough, $\{\bm{x}e^{i\theta}\}$ are the only local
minimizers, and also global minimizers. This fact makes the
optimization approach easily to find a solution, it is also noticed by
authors of literature~\cite{Sun2015}. 
\item As opposed to Wirtinger Flow (WF) algorithm, we do iterate from a random
initialization without a careful choice of initialization. The algorithm
looses at most a logarithmic factor in the sampling complexity as the
WF\@. It is an open question whether the complexity can be proportional
to $n$.
\item Although the norm of solution $\norm{\bm{x}}$ is not known a
  priori, it has an uniform upper bound with high probability in
  Gaussian model, since we have $\mathbb{E}[\lvert\bm{a}_r^*\bm{x} \rvert^2]=\norm{\bm{x}}^2$.
\end{enumerate}
\end{remark}

It is known that the phase retrieval problem is less difficult
to solve when the target signal $\bm{x}$ is real. We shall give the
theoretical aspect why it happens. It is due to
the local convexity of $f(\bm{z})$ when $\bm{z}$ is in a
neighborhood of the solution $\bm{x}$. 

\subsection{Local Convexity with Real-Valued Signal}
\label{sec:real-case}

We assume the target signal $\bm{x}\in\mathbb{R}^n$ and the sampling
vectors $\bm{a}_r$ are drawn from the Gaussian model. The gradient and Hessian of objective
$f$ have the following form (up to a scale factor $1/2$):
\begin{subequations}
  \begin{align}
      \nabla f(\bm{z}) &=\frac{1}{m}\sum_{r=1}^m \re\Bigl(\left(\lvert
                       \bm{a}_r^*\bm{z}\rvert^2-y_r\right)(\bm{a}_r^*\bm{z})\bm{a}_r\Bigr),\\
\nabla^2 f(\bm{z}) &=\frac{1}{m}\sum_{r=1}^m\re\Bigl(
      \left(2\lvert
        \bm{a}_r^*\bm{z}\rvert^2-y_r\right)\bm{a}_r\bm{a}_r^*+
      \left(\bm{a}_r^*\bm{z}\right)^2\bm{a}_r\bm{a}_r^T\Bigr).
  \end{align}
\end{subequations}
\begin{lem}
\label{lem:5}
  Let $\bm{z}\in\mathbb{R}^n$ be a fixed
  vector independent of the sampling vectors. Then we have
   \begin{multline*}
    \mathbb{E}[\nabla^2 f(\bm{z})]=
    \begin{bmatrix}
      A&2\bm{z}\bm{z}^T\\
      2\bar{\bm{z}}\bm{z}^*&\overline{A}
    \end{bmatrix},\\\text{ with } A=\left(2\norm{\bm{z}}^2-\norm{\bm{x}}^2\right)I+2\bm{z}\bm{z}^*-\bm{x}\bm{x}^*.
  \end{multline*}
For real case, the expectation matrix for a fixed vector $\bm{z}$,
\begin{equation*}
    \mathbb{E}[\nabla^2 f(\bm{z})]=
      \left(2\norm{\bm{z}}^2-\norm{\bm{x}}^2\right)I+4\bm{z}\bm{z}^T-\bm{x}\bm{x}^T.
  \end{equation*}
\end{lem}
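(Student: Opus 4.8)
The plan is to reduce the whole computation to a single Gaussian fourth-moment identity. Since the sampling vectors $\bm{a}_r$ are i.i.d., linearity of expectation collapses the average over $r$ to the expectation of one summand, so I would drop the index and write $\bm{a}$ for a single circularly-symmetric complex Gaussian vector, recording the first- and second-moment facts $\mathbb{E}[\bm{a}\bm{a}^*]=I$ and $\mathbb{E}[\bm{a}\bm{a}^T]=\bm{0}$ (the vanishing of the latter is the defining feature of the complex model and will drive the argument). After substituting $y_r=\lvert\bm{a}_r^*\bm{x}\rvert^2$, the only nontrivial quantities are the diagonal block $\mathbb{E}\bigl[(2\lvert\bm{a}^*\bm{z}\rvert^2-\lvert\bm{a}^*\bm{x}\rvert^2)\bm{a}\bm{a}^*\bigr]$ and the off-diagonal block $\mathbb{E}\bigl[(\bm{a}^*\bm{z})^2\bm{a}\bm{a}^T\bigr]$.

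The engine is Wick's (Isserlis') theorem for the complex Gaussian, which states that a mixed fourth moment equals the sum over pairings of a non-conjugated entry with a conjugated one, i.e.\ $\mathbb{E}[a_j a_k\bar{a}_p\bar{a}_q]=\delta_{jp}\delta_{kq}+\delta_{jq}\delta_{kp}$, all other pairings vanishing. Applying this entrywise, I expect the two clean identities $\mathbb{E}\bigl[\lvert\bm{a}^*\bm{u}\rvert^2\bm{a}\bm{a}^*\bigr]=\norm{\bm{u}}^2 I+\bm{u}\bm{u}^*$ for any fixed $\bm{u}$, and $\mathbb{E}\bigl[(\bm{a}^*\bm{z})^2\bm{a}\bm{a}^T\bigr]=2\bm{z}\bm{z}^T$ for real $\bm{z}$. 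Specializing the first identity to $\bm{u}=\bm{z}$ and $\bm{u}=\bm{x}$ and taking the prescribed linear combination yields the diagonal block $A=(2\norm{\bm{z}}^2-\norm{\bm{x}}^2)I+2\bm{z}\bm{z}^*-\bm{x}\bm{x}^*$, while the second identity supplies the off-diagonal block $2\bm{z}\bm{z}^T$; assembling the $2n\times 2n$ block matrix (its lower blocks being the conjugates of the upper ones) gives the first display.

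For the real-case formula I would pass from the complex Hessian to the Hessian of $f$ regarded as a function on $\mathbb{R}^n$ by restricting the admissible perturbation $\bm{h}$ to be real. Writing the complex Hessian as $\bigl[\begin{smallmatrix}B&C\\\bar{C}&\bar{B}\end{smallmatrix}\bigr]$ with $B$ and $C$ the two blocks above, the quadratic form on the doubled vector $[\bm{h};\bar{\bm{h}}]$ degenerates, under the constraint $\bar{\bm{h}}=\bm{h}$, to $\bm{h}^T\,2\re(B+C)\,\bm{h}$; after the factor-$1/2$ scaling already adopted in this subsection, the real-valued Hessian is $\re(B+C)$. Taking expectations and using that $\bm{z},\bm{x}$ are real (so taking real parts is vacuous and $\bm{z}\bm{z}^*=\bm{z}\bm{z}^T$, $\bm{x}\bm{x}^*=\bm{x}\bm{x}^T$), I add $\mathbb{E}[B]=A$ and $\mathbb{E}[C]=2\bm{z}\bm{z}^T$ to obtain $(2\norm{\bm{z}}^2-\norm{\bm{x}}^2)I+4\bm{z}\bm{z}^T-\bm{x}\bm{x}^T$, matching the second display.

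The one genuine obstacle is the bookkeeping in the fourth-moment step: one must track conjugations carefully, because in the circular complex model only non-conjugate-with-conjugate pairings survive and the ``anomalous'' pairings vanish precisely because $\mathbb{E}[\bm{a}\bm{a}^T]=\bm{0}$. This is exactly what forces the off-diagonal block to be built from $\bm{a}\bm{a}^T$ rather than $\bm{a}\bm{a}^*$, and it is where an index or sign slip would propagate. A secondary, more conceptual point is the reduction to the real Hessian and the matching of the $1/2$ convention; getting the combinatorial factors in $2\bm{z}\bm{z}^T$ and $4\bm{z}\bm{z}^T$ correct hinges on both of these being handled consistently.
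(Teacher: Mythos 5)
Your proof is correct. The paper actually states Lemma~\ref{lem:5} \emph{without} proof, and your route is precisely the routine calculation the authors implicitly rely on: the circular complex-Gaussian fourth-moment (Wick) identities, in particular $\mathbb{E}\bigl[\lvert\bm{a}^*\bm{u}\rvert^2\bm{a}\bm{a}^*\bigr]=\norm{\bm{u}}^2I+\bm{u}\bm{u}^*$ (used verbatim in the paper's proof of Lemma~\ref{lem:8}) and $\mathbb{E}\bigl[(\bm{a}^*\bm{z})^2\bm{a}\bm{a}^T\bigr]=2\bm{z}\bm{z}^T$, together with your reduction of the doubled complex Hessian to the real Hessian $\re(B+C)$ under the section's factor-$1/2$ convention, which correctly produces the $4\bm{z}\bm{z}^T$ term in the real-case formula.
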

\begin{lem}[\cite{Candes2015}, Lemma 7.3]
\label{lem:7}
  Assume $\bm{u},\bm{v}\in\mathbb{C}^n$ are fixed vectors obeying $\norm{\bm{u}}=\norm{\bm{v}}=1$ which are independent of the sampling vectors. Furthermore, assume the measurement vectors $\bm{a}_r$ are distributed according to the Gaussian model. Then
  \begin{align*}
    \mathbb{E}[\lvert\bm{a}_r^*\bm{u}\rvert^{2k}]&=k!\\
    \mathbb{E}[\re(\bm{u}^*\bm{a}_r\bm{a}_r^*\bm{v})\lvert\bm{a}_r^*\bm{u} \rvert^2]&=2\re(\bm{u}^*\bm{v})\\ \mathbb{E}[(\re(\bm{u}^*\bm{a}_r\bm{a}_r^*\bm{v}))^2]&=\frac{1}{2}+\frac{3}{2}(\re(\bm{u}^*\bm{v}))^2-\frac{1}{2}(\im(\bm{u}^*\bm{v}))^2.
  \end{align*}
\end{lem}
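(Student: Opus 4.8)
The plan is to reduce the $n$-dimensional moment computations to scalar moments of a pair of jointly complex Gaussian variables, and then invoke the complex Wick (Isserlis) pairing rule. First I would introduce the scalar variables $\xi=\bm{a}_r^*\bm{u}$ and $\eta=\bm{a}_r^*\bm{v}$, which are linear functionals of the standard complex Gaussian vector $\bm{a}_r$ and hence jointly complex Gaussian with mean zero. Writing $\bm{a}_r=\bm{p}+i\bm{q}$ with $\bm{p},\bm{q}\sim N(\bm{0},\bm{I}/2)$ independent, a direct computation gives $\mathbb{E}[\bm{a}_r\bm{a}_r^*]=\bm{I}$ and $\mathbb{E}[\bm{a}_r\bm{a}_r^T]=\bm{0}$. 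From these two facts I would record the complete second-order structure: $\mathbb{E}[|\xi|^2]=\norm{\bm{u}}^2=1$, $\mathbb{E}[|\eta|^2]=1$, $\mathbb{E}[\eta\bar{\xi}]=\bm{u}^*\bm{v}=:\rho$, and the vanishing of all pseudo-covariances $\mathbb{E}[\xi^2]=\mathbb{E}[\eta^2]=\mathbb{E}[\xi\eta]=0$. This last point is the crucial structural input: it certifies that $(\xi,\eta)$ is circularly symmetric, so that in any Wick expansion only pairings of a variable with a conjugated variable survive.

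With this setup the three identities follow mechanically. For the first, circular symmetry makes $|\xi|^2$ exponentially distributed with unit mean (its real and imaginary parts are independent $N(0,1/2)$), so $\mathbb{E}[|\xi|^{2k}]=\mathbb{E}[(|\xi|^2)^k]=k!$. For the second, I would write $\bm{u}^*\bm{a}_r\bm{a}_r^*\bm{v}=\bar{\xi}\eta$ and $|\bm{a}_r^*\bm{u}|^2=\xi\bar{\xi}$, reducing the quantity to $\mathbb{E}[\bar{\xi}\eta|\xi|^2]=\mathbb{E}[\bar{\xi}^2\xi\eta]$; the two surviving pairings each contribute $\mathbb{E}[\xi\bar{\xi}]\,\mathbb{E}[\eta\bar{\xi}]=\rho$, giving $2\rho$, and taking real parts yields $2\re(\bm{u}^*\bm{v})$. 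For the third, I would expand $(\re(\bar{\xi}\eta))^2=\tfrac14(\bar{\xi}^2\eta^2+2|\xi|^2|\eta|^2+\xi^2\bar{\eta}^2)$ and apply the pairing rule termwise, obtaining $\mathbb{E}[\bar{\xi}^2\eta^2]=2\rho^2$, its conjugate $2\bar{\rho}^2$, and $\mathbb{E}[|\xi|^2|\eta|^2]=1+|\rho|^2$; assembling these and rewriting $\rho^2+\bar{\rho}^2=2(\re\rho)^2-2(\im\rho)^2$ together with $|\rho|^2=(\re\rho)^2+(\im\rho)^2$ produces $\tfrac12+\tfrac32(\re\rho)^2-\tfrac12(\im\rho)^2$.

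The only genuinely delicate step is establishing the circular symmetry of $(\xi,\eta)$, i.e. the vanishing of the pseudo-covariances, since this is precisely what licenses the simplified complex Wick rule; once it is in place every remaining computation is a short bookkeeping of admissible pairings. I would also double-check the normalization $\mathbb{E}[|(\bm{a}_r)_j|^2]=1$ arising from the variance $1/2$ convention on each of the real and imaginary parts, as this is where an errant factor of two would most easily slip in and corrupt the constants $k!$, $2\re(\bm{u}^*\bm{v})$, and the $\tfrac12,\tfrac32$ coefficients of the final identity.
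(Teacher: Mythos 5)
Your proof is correct, but note that there is nothing in this paper to compare it against: the lemma is imported verbatim from the Wirtinger Flow paper (\cite{Candes2015}, Lemma 7.3) and is never proved here, so your argument serves as a self-contained substitute rather than an alternative to an in-paper proof. Structurally, your route also differs from the one in the cited source: Candès et al.\ exploit unitary invariance of the Gaussian model, reducing to $\bm{u}=\bm{e}_1$, $\bm{v}=\alpha\bm{e}_1+\beta\bm{e}_2$ with $\alpha=\bm{u}^*\bm{v}$ and $\lvert\alpha\rvert^2+\lvert\beta\rvert^2=1$, so that all three identities become moment computations with two \emph{independent} scalar complex Gaussians; you instead keep the correlated pair $(\xi,\eta)=(\bm{a}_r^*\bm{u},\bm{a}_r^*\bm{v})$ and invoke the complex Wick (Isserlis) rule, correctly identifying that the vanishing pseudo-covariances $\mathbb{E}[\xi^2]=\mathbb{E}[\eta^2]=\mathbb{E}[\xi\eta]=0$ are what license the conjugate-pairing-only expansion. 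Your bookkeeping checks out: with $\rho=\bm{u}^*\bm{v}$ one has $\mathbb{E}[\bar{\xi}^2\xi\eta]=2\rho$, $\mathbb{E}[\bar{\xi}^2\eta^2]=2\rho^2$, $\mathbb{E}[\lvert\xi\rvert^2\lvert\eta\rvert^2]=1+\lvert\rho\rvert^2$, and the identities $\rho^2+\bar{\rho}^2=2(\re\rho)^2-2(\im\rho)^2$, $\lvert\rho\rvert^2=(\re\rho)^2+(\im\rho)^2$ assemble to $\tfrac12+\tfrac32(\re\rho)^2-\tfrac12(\im\rho)^2$ exactly as claimed; likewise $\lvert\xi\rvert^2\sim\mathrm{Exp}(1)$ gives the $k!$ moments. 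The trade-off between the two approaches is minor: the orthogonal-decomposition route avoids any appeal to Wick machinery at the cost of a change of basis, while yours generalizes mechanically to higher mixed moments via the permanent formula.
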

\begin{theorem}[Convexity on Expectation (Asymptotic Convexity)]
\label{thm:2}
  Let $\bm{z}\in\mathbb{R}^n$ be a fixed
  vector independent of the sampling vectors, we have that
  $\mathbb{E}[f(\bm{z})]$ is convex in the ellipse
  $\dist(\bm{z},\bm{x})\leq \norm{\bm{x}}/12$.
\end{theorem}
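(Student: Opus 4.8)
The plan is to reduce the convexity claim to a positive‑semidefiniteness statement about the expected Hessian and then to control a rank‑two indefinite perturbation. Since $\mathbb{E}[f]$ is a smooth real‑valued function of $\bm{z}\in\mathbb{R}^n$, convexity on the region in question is equivalent to $\mathbb{E}[\nabla^2 f(\bm{z})]$ being positive semidefinite for every $\bm{z}$ there. By Lemma~\ref{lem:5} (real case) the relevant matrix is
\[
M(\bm{z}):=\mathbb{E}[\nabla^2 f(\bm{z})]=\bigl(2\norm{\bm{z}}^2-\norm{\bm{x}}^2\bigr)I+4\bm{z}\bm{z}^T-\bm{x}\bm{x}^T .
\]
In the real case $\dist(\bm{z},\bm{x})=\operatorname{min}\{\norm{\bm{z}-\bm{x}},\norm{\bm{z}+\bm{x}}\}$, so the region is the union of the two balls $B(\bm{x},r)$ and $B(-\bm{x},r)$ with $r=\norm{\bm{x}}/12$. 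Because $M(-\bm{z})=M(\bm{z})$ (equivalently $\mathbb{E}[f]$ is even in $\bm{z}$, as the gradient in Lemma~\ref{lem:8} is odd), it suffices to prove positive definiteness of $M(\bm{z})$ on $B(\bm{x},r)$, and I would state this symmetry reduction first.

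Next I would fix $\bm{z}$ with $\norm{\bm{z}-\bm{x}}\leq\norm{\bm{x}}/12$ and write $\bm{h}=\bm{z}-\bm{x}$, so $\norm{\bm{h}}\leq\norm{\bm{x}}/12$. The triangle inequality gives $\tfrac{11}{12}\norm{\bm{x}}\leq\norm{\bm{z}}\leq\tfrac{13}{12}\norm{\bm{x}}$, whence the scalar coefficient is bounded below,
\[
c:=2\norm{\bm{z}}^2-\norm{\bm{x}}^2\geq 2\cdot\tfrac{121}{144}\norm{\bm{x}}^2-\norm{\bm{x}}^2=\tfrac{49}{72}\norm{\bm{x}}^2>0 .
\]
This already handles the eigenvalue of $M$ on the orthogonal complement of $\operatorname{span}\{\bm{z},\bm{x}\}$, where $M$ acts as $cI$; the remaining work is the indefinite part $4\bm{z}\bm{z}^T-\bm{x}\bm{x}^T$.

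For the core estimate I would take an arbitrary unit vector $\bm{u}\in\mathbb{R}^n$, set $a=\bm{x}^T\bm{u}$ and $b=\bm{h}^T\bm{u}$ (so $|b|\leq\norm{\bm{h}}\leq\norm{\bm{x}}/12$), and expand
\[
\bm{u}^T M(\bm{z})\bm{u}=c+4(\bm{z}^T\bm{u})^2-(\bm{x}^T\bm{u})^2=c+3a^2+8ab+4b^2 .
\]
Completing the square in $a$ gives $3a^2+8ab+4b^2=3\bigl(a+\tfrac{4}{3}b\bigr)^2-\tfrac{4}{3}b^2\geq-\tfrac{4}{3}b^2\geq-\tfrac{1}{108}\norm{\bm{x}}^2$, so that
\[
\bm{u}^T M(\bm{z})\bm{u}\geq\Bigl(\tfrac{49}{72}-\tfrac{1}{108}\Bigr)\norm{\bm{x}}^2>0 .
\]
Since $\bm{u}$ was arbitrary, $M(\bm{z})$ is positive definite throughout $B(\bm{x},r)$ (hence on the whole region by the evenness), which yields the asserted convexity, in fact strong convexity.

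The only genuinely delicate point is the indefinite rank‑two term: one cannot simply discard $4\bm{z}\bm{z}^T$ and bound $-\bm{x}\bm{x}^T$ by $-\norm{\bm{x}}^2 I$, because the worst direction is $\bm{u}\perp\bm{z}$, where the positive term vanishes. The completing‑the‑square in the coordinates $(a,b)$ is exactly what treats this worst case uniformly over $\bm{u}$, and it is the step I expect to require the most care; everything else is bookkeeping with the triangle inequality. The large numerical slack in the final inequality indicates that the radius $\norm{\bm{x}}/12$ is conservative and chosen for convenience rather than tightness.
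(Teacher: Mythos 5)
Your proof is correct, and it proves the same underlying claim as the paper --- positive semidefiniteness of the expected Hessian from Lemma~\ref{lem:5} on the ball of radius $\norm{\bm{x}}/12$ around $\bm{x}$ --- but by a genuinely different decomposition. The paper parameterizes $\bm{z}=\bm{x}+t\bm{w}$ with $\norm{\bm{w}}=1$, expands $\mathbb{E}[\nabla^2 f(\bm{x}+t\bm{w})]$ as a matrix-valued quadratic polynomial in $t$, and bounds each coefficient matrix below in the semidefinite order; the key estimate $(\bm{x}^T\bm{w})I+\bm{x}\bm{w}^T+\bm{w}\bm{x}^T\succeq-3\norm{\bm{x}}I$ rests on the eigenvalue fact quoted after the theorem, which in turn comes from Lemma~\ref{lem:2}. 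This gives $\mathbb{E}[\nabla^2 f(\bm{z})]\succeq\bigl(2t^2-12\norm{\bm{x}}t+\norm{\bm{x}}^2\bigr)I\succeq 0$ for $t\leq\norm{\bm{x}}/12$. You instead keep the Hessian as $cI+4\bm{z}\bm{z}^T-\bm{x}\bm{x}^T$, bound the scalar $c\geq\tfrac{49}{72}\norm{\bm{x}}^2$ by the triangle inequality, and kill the indefinite rank-two part by completing the square in $(a,b)=(\bm{x}^T\bm{u},\bm{h}^T\bm{u})$, getting $3a^2+8ab+4b^2\geq-\tfrac{4}{3}b^2\geq-\tfrac{1}{108}\norm{\bm{x}}^2$. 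Your route buys two things the paper's does not state: an explicit uniform bound $\bm{u}^T\mathbb{E}[\nabla^2 f(\bm{z})]\bm{u}\geq\tfrac{145}{216}\norm{\bm{x}}^2$, i.e.\ strong convexity of the expectation rather than mere convexity, and independence from the auxiliary eigenvalue lemma. You also handle the component of the region around $-\bm{x}$ explicitly via the symmetry $M(-\bm{z})=M(\bm{z})$, which the paper's proof (covering only $\bm{z}=\bm{x}+t\bm{w}$, $t\geq 0$) leaves implicit. The arithmetic in your argument checks out throughout.
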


In this proof, we use the following fact that the eigenvalues of matrix
$(\bm{x}^T\bm{w})I+\bm{x}\bm{w}^T+\bm{w}\bm{x}^T$ are
\[\{\underbrace{\bm{x}^T\bm{w},\ldots,\bm{x}^T\bm{w}}_{n-2~\text{terms}},2\bm{x}^T\bm{w}\pm
  \norm{\bm{x}}\}.\]
It can be obtained from the Lemma~\ref{lem:2}.

\begin{theorem}[Strong Convex]
\label{thm:3}
  In the setup of Theorem~\ref{thm:main}, for all
  $\bm{z}\in\mathbb{R}^n$ in the ellipse around $\bm{x}$, more
  specified, $\dist(\bm{z},\bm{x})\leq\norm{\bm{x}}/24$, the following
  \begin{equation*}
    \bm{w}^T\nabla^2 f(\bm{z})\bm{w}\geq 0
  \end{equation*}
holds uniformly with probability $1-e^{-\alpha n}$ ($\alpha$ depends
on $m$), where $\bm{w}\in\mathbb{R}^n$ such that $\norm{\bm{w}}=1$.
\end{theorem}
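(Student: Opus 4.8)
The plan is to prove positive semidefiniteness of the Hessian uniformly over the neighborhood by splitting the quadratic form $\bm{w}^T\nabla^2 f(\bm{z})\bm{w}$ into its expectation and a fluctuation, bounding the former below by an explicit positive constant and showing the latter is uniformly small with probability $1-e^{-\alpha n}$. First I would observe that $f$, and hence $\nabla^2 f$, is even in $\bm{z}$ (both depend on $\bm{z}$ only through $\lvert\bm{a}_r^*\bm{z}\rvert^2$ and $(\bm{a}_r^*\bm{z})^2$), so the two branches $\dist(\bm{z},\bm{x})=\norm{\bm{z}-\bm{x}}$ and $\norm{\bm{z}+\bm{x}}$ are equivalent and it suffices to treat $\bm{z}=\bm{x}+\bm{e}$ with $\norm{\bm{e}}\leq\norm{\bm{x}}/24$. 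For fixed $\bm{z},\bm{w}$ I write
\[
\bm{w}^T\nabla^2 f(\bm{z})\bm{w} = \bm{w}^T\mathbb{E}[\nabla^2 f(\bm{z})]\bm{w} + \bm{w}^T\bigl(\nabla^2 f(\bm{z})-\mathbb{E}[\nabla^2 f(\bm{z})]\bigr)\bm{w}.
\]

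For the expectation term I invoke the real-case formula of Lemma~\ref{lem:5}, $\mathbb{E}[\nabla^2 f(\bm{z})]=(2\norm{\bm{z}}^2-\norm{\bm{x}}^2)I+4\bm{z}\bm{z}^T-\bm{x}\bm{x}^T$. Substituting $\bm{z}=\bm{x}+\bm{e}$ and $\norm{\bm{w}}=1$ and expanding yields, after a short computation,
\[
\bm{w}^T\mathbb{E}[\nabla^2 f(\bm{z})]\bm{w}=\norm{\bm{x}}^2+4\bm{x}^T\bm{e}+2\norm{\bm{e}}^2+3(\bm{x}^T\bm{w})^2+8(\bm{x}^T\bm{w})(\bm{e}^T\bm{w})+4(\bm{e}^T\bm{w})^2.
\]
Discarding the nonnegative squares and bounding the two indefinite terms by Cauchy--Schwarz, $4\bm{x}^T\bm{e}\geq-4\norm{\bm{x}}\norm{\bm{e}}$ and $8(\bm{x}^T\bm{w})(\bm{e}^T\bm{w})\geq-8\norm{\bm{x}}\norm{\bm{e}}$, gives $\bm{w}^T\mathbb{E}[\nabla^2 f(\bm{z})]\bm{w}\geq\norm{\bm{x}}^2-12\norm{\bm{x}}\norm{\bm{e}}\geq\tfrac12\norm{\bm{x}}^2$ throughout $\norm{\bm{e}}\leq\norm{\bm{x}}/24$. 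This is precisely where the radius $1/24$ (half the $1/12$ of Theorem~\ref{thm:2}) enters: it secures a uniform margin $\tfrac12\norm{\bm{x}}^2$ to absorb the fluctuations.

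The heart of the argument, and the main obstacle, is the uniform fluctuation estimate
\[
\sup_{\norm{\bm{e}}\leq\norm{\bm{x}}/24,\ \norm{\bm{w}}=1}\bigl\lvert\bm{w}^T(\nabla^2 f(\bm{z})-\mathbb{E}[\nabla^2 f(\bm{z})])\bm{w}\bigr\rvert\leq \tfrac12\norm{\bm{x}}^2
\]
with probability $1-e^{-\alpha n}$. Writing $\bm{w}^T\nabla^2 f(\bm{z})\bm{w}=\tfrac1m\sum_r h_r(\bm{z},\bm{w})$, each summand $h_r$ is a quartic form in the Gaussian vector $\bm{a}_r$, i.e. a product of two quadratics, hence sub-exponential with parameters controlled by $\norm{\bm{z}}^2$, $\norm{\bm{x}}^2$ and $\norm{\bm{w}}^2$; the relevant first and second moments come from Lemma~\ref{lem:7}. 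For fixed $(\bm{z},\bm{w})$ a Bernstein inequality then gives pointwise concentration at scale $\epsilon\norm{\bm{x}}^2$ with failure probability at most $e^{-cm\epsilon^2}$ for small $\epsilon$. To upgrade to a uniform statement I discretize: on the good event $\max_r\norm{\bm{a}_r}\lesssim\sqrt n$ (valid with probability $1-e^{-\Omega(n)}$) the map $(\bm{z},\bm{w})\mapsto \bm{w}^T(\nabla^2 f(\bm{z})-\mathbb{E}[\nabla^2 f(\bm{z})])\bm{w}$ is Lipschitz with constant polynomial in $\norm{\bm{x}}$, so covering the ellipse and the unit sphere by $\epsilon$-nets of cardinality $e^{O(n)}$, applying the pointwise bound with a union bound over the net, and controlling the residual by the Lipschitz estimate delivers the uniform bound. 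The union bound costs a factor $e^{O(n)}$, which the Bernstein tail $e^{-cm}$ defeats precisely because $m\gtrsim n\log n$; this is the source of the exponent $\alpha$ depending on $m$.

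Finally, on the intersection of the two high-probability events, for every $\bm{z}$ in the ellipse and every unit $\bm{w}$,
\[
\bm{w}^T\nabla^2 f(\bm{z})\bm{w}\geq \tfrac12\norm{\bm{x}}^2-\tfrac12\norm{\bm{x}}^2=0,
\]
which is the claim. I expect the delicate points to be the bookkeeping of the sub-exponential parameters and Lipschitz constants, so that the net cardinality $e^{O(n)}$ is genuinely dominated by the tail, and verifying that the fluctuations of the $-\bm{x}\bm{x}^T$ and cross terms do not demand a larger $m$ than the $n\log n$ already assumed in Theorem~\ref{thm:main}.
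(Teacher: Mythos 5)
Your expectation step is correct and gives exactly the right margin: substituting $\bm{z}=\bm{x}+\bm{e}$ into Lemma~\ref{lem:5} and using Cauchy--Schwarz does yield $\bm{w}^T\mathbb{E}[\nabla^2 f(\bm{z})]\bm{w}\geq\norm{\bm{x}}^2-12\norm{\bm{x}}\norm{\bm{e}}\geq\norm{\bm{x}}^2/2$ on the ball of radius $\norm{\bm{x}}/24$. The genuine gap is in the fluctuation step. Each summand $h_r(\bm{z},\bm{w})$ is a fourth-degree polynomial in the Gaussian vector $\bm{a}_r$ (a product of two quadratic forms, e.g.\ $\lvert\bm{a}_r^*\bm{z}\rvert^2\lvert\bm{a}_r^*\bm{w}\rvert^2$), and such variables are \emph{not} sub-exponential: already $\lvert\bm{a}_r^*\bm{w}\rvert^4$ has tail $\exp(-c\sqrt{t})$. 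So the pointwise tail $e^{-cm\epsilon^2}$ you invoke is unavailable; the generalized Bernstein inequality for such sub-Weibull sums gives only $\exp\bigl(-c\min(m\epsilon^2,\sqrt{m\epsilon})\bigr)$, i.e.\ $\exp(-c\sqrt{m})$ at the constant precision $\epsilon\asymp 1$ you need. With $m\asymp n\log n$ this is $\exp(-c\sqrt{n\log n})$, which is destroyed by the union bound over an $\epsilon$-net of cardinality $e^{O(n)}$. Your fallback event $\max_r\norm{\bm{a}_r}\lesssim\sqrt{n}$ does not repair this: it only bounds each summand by $O(n^2)$, and Bernstein with that range gives $\exp(-cm/n^2)$, which is worse still. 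The known fix is the truncation machinery used by Cand\`es et al.\ to prove the paper's Lemma~\ref{lem:4} (split each summand at polylogarithmic level in $\lvert\bm{a}_r^*\cdot\rvert$, treat bounded and tail parts separately); that is a substantive missing idea, not bookkeeping.

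The paper's proof avoids this obstacle by a different decomposition. Writing $\bm{z}=\bm{x}+t\bm{w}$ and working along that direction, it shows
\begin{equation*}
  \bm{w}^T\nabla^2 f(\bm{z})\bm{w}=\frac{3}{m}\sum_{r=1}^m (A_rt+B_r)^2-\frac{1}{2}\bm{w}^T\nabla^2 f(\bm{x})\bm{w},
\end{equation*}
with $A_r=\lvert\bm{a}_r^*\bm{w}\rvert^2$ and $B_r=\re(\bm{w}^*\bm{a}_r\bm{a}_r^*\bm{x})$. This requires only two probabilistic inputs: a \emph{one-sided lower-tail} bound for the average of the nonnegative variables $Z_r(t)=(A_rt+B_r)^2$ (Lemma 5.4 of~\cite{White2015}, which needs only a variance bound, since lower tails of sums of nonnegative variables are sub-Gaussian no matter how heavy the upper tails are), and the spectral concentration of $\nabla^2 f(\bm{x})$ at the single anchor point $\bm{x}$, which is the already-cited Lemma~\ref{lem:4}. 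The heavy upper tails of the quartic terms never enter. Note that the paper's argument is carried out pointwise in $t$ and with $\bm{w}$ equal to the direction of $\bm{z}-\bm{x}$, so your attempt at genuine uniformity over all pairs $(\bm{z},\bm{w})$ is actually more ambitious than what the paper's own proof establishes --- but as written your plan fails at the concentration step above, and patching it would require importing the truncation argument rather than a routine net-plus-Bernstein computation.
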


\begin{remark}
  Since the proof of local convexity is based on the
  Lemma~\ref{lem:7}, the result holds only for sampling vectors
  followed Gaussian model. It is an open problem whether it holds for
  CDP model or not.
\end{remark}
From the analysis above, we find that the randomness assumption is
useful for theoretical analysis. Theorems~\ref{thm:2} and~\ref{thm:3} state the convexity of the expectation of $f(\bm{z})$ and the
strong convexity of function $f(\bm{z})$ when $\bm{z}$ is around a
solution of the phase retrieval problem. These results imply the
numerical algorithms will perform well in the neighborhood of a
solution. And the local geometrical convergence rate is followed from
the convex optimization. 

\section{Numerical Simulations}
\label{sec:numer-simul}

This section introduces numerical simulations to illustrate the
empirical sampling complexity to ensure the convergence to a solution
of the optimization approach and study the effectiveness of line
search methods, especially the LBFGS method. Since gradient descent
with stepsize $\alpha_k$ from Theorem~\ref{thm:main} takes more
iterations than that with stepsize $\alpha_k$ from Wolfe
conditions. For comparison, we only consider the steepest gradient
descent algorithm.
\subsection{Relative Error and Termination Condition}
\label{sec:error-meas-term}

We denote the solution to the problem \eqref{eq:1} as
$\bm{x}$, and $\bm{z}$ is the returned solution by the line
search method, the relative error is defined
as 
\begin{equation}
  \label{eq:18}
  relerr = \min_{\lvert c\rvert=1}\frac{\norm{c\bm{x}-\bm{z}}_2}{\norm{\bm{x}}_2},
\end{equation}
where $c$ is to get rid of the effect of the constant phase shift of
phase problem. From 
\begin{equation*}
\langle c\bm{x}-\bm{z},c\bm{x}-\bm{z}\rangle=\lvert
  c\rvert^2\norm{\bm{x}}_2^2-2\re \langle
  c\bm{x},\bm{z}\rangle+\norm{\bm{z}}_2^2, 
\end{equation*}
the constant $c$ is given by
\begin{equation*}
  c=\frac{\langle \bm{x},\bm{z}\rangle}{\lvert \langle \bm{x},\bm{z}\rangle\rvert }.
\end{equation*}
We consider a vector to be successfully recovered if the relative
error is below $10^{-5}$.

In line search methods, for stop criteria, we take the three options:
the number of iterations is $600$, the tolerances for relative change
of objective function and relative change of variable 
during one iteration both are $10^{-12}$. The number of stored vector
pairs for LBFGS method $s=2$. 

\subsection{Recovery Rate}
\label{sec:recovery-rate}
We begin by examining the empirical recovery rate of the line search
method (LBFGS is used) for recovering random Gaussian signal
$\bm{x}\in\mathbb{R}^n$ or $\mathbb{C}^n$ under the Gaussian and CDP
models with octanary pattern. In the one dimensional simulations, we
consider signals with length $n=512,1014$. For two dimensional tests,
the test signals are in size $n=n_1\times n_2 = 128\times 128$. All test signals are drawn from the
Gaussian distribution, that is $\bm{x}\sim N(0,I)$ when
$\bm{x}\in\mathbb{R}^n$ and $\bm{x}\sim N(0,I)+iN(0,I)$ when
$\bm{x}\in\mathbb{C}^n$. 
The algorithm is  tested for $17$ values of $m=\delta n$,
where $\delta=2,2.5,\ldots,9.5,10$ for Gaussian model. In CDP model,
algorithm is tested for $9$ values of $L$ from $2$ to $10$. We
report the empirical probability of success in the two models in Figure~\ref{fig:2}. The
empirical probability of success is an average over 1000 trials, we
generate 10 different test signals and corresponding random sampling matrix $A$ in Gaussian model or
random masks in CDP model and begin the algorithm from 100 random
initials for a fixed random sampling matrix. 

No matter what the sampling vectors are, in Gaussian model or CDP model, for a signal with length $n=1024$, the optimization method can
successfully recover the signal when $m=7n$, see Figure
~\ref{fig:2a}. Note that $m$ is the order $n\log n$, which matches our
analysis. This empirical sampling complexity is greater than that of
WF with spectral initialization, WF can recover the signal when
$m=4n$~\cite{Candes2015}. It is not surprising for this fact, the need
for more sampling vectors is at the price of beginning from a random
initialization.

Assume the signal is real and the sampling vectors are in CDP
model, the algorithm recovers the real signals with sampling patterns
$L=5$. It recovers the real signals with sampling patterns $L=8$
without the priori constraint of real-valuedness. Figure~\ref{fig:2b}
illustrate this observation.
\begin{figure*}
  \begin{subfigure}[t]{.5\textwidth}
    \centering
    \includegraphics{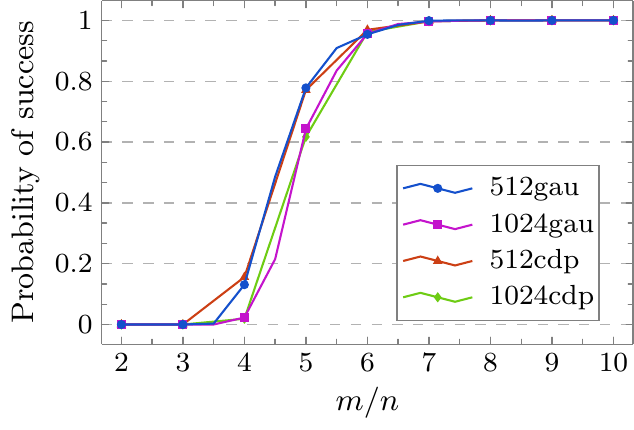}
    \caption{Recovery probability for 1-D signals in Gaussian and CDP models\label{fig:2a}}
  \end{subfigure}%
\hskip 1cm
 \begin{subfigure}[t]{.5\textwidth}
    \centering
    \includegraphics{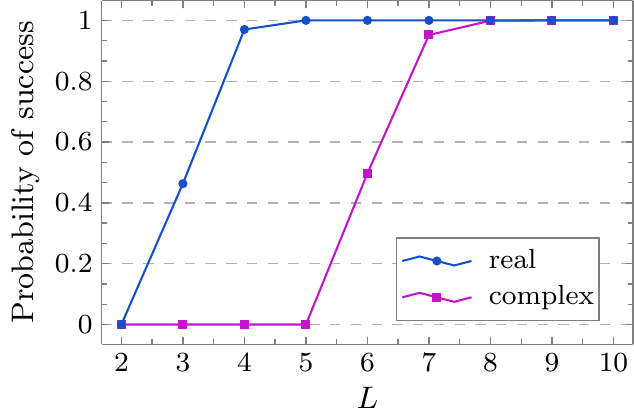}
    \caption{Recovery probability for 2-D signals in CDP model\label{fig:2b}}
  \end{subfigure}
  \caption{Average recovery probability\label{fig:2}}
\end{figure*}
\subsection{Alternative Projection}
\label{sec:altern-proj}

For comparison, we also apply the alternative projection algorithm to
phase retrieval problem. In the setting of multiple illuminations, the
algorithm is described in Algorithm~\ref{alg:2}.
\begin{algorithm}[H]
  \caption{Alternative Projection Algorithm\label{alg:2}}
  \begin{algorithmic}
    \REQUIRE initialization $\bm{z}_0$, maximum iterations $N$ and error $\epsilon$.
    \FOR {$k=0,\ldots,N$}
    \STATE update $\bm{c}$:
    \begin{equation}
      \label{eq:19}
      (\bm{c}_k)_i =
      \begin{cases}
        \frac{(A\bm{z}_k)_i}{\lvert (A\bm{z}_k)_i\rvert},& (A\bm{z}_k)_i\neq 0\\
        1,& (A\bm{z}_k)_i= 0
      \end{cases}
    \end{equation}
    \STATE update $\bm{z}$:
    \begin{equation}
      \label{eq:20}
      \bm{z}_{k+1} = \min \quad \norm{A\bm{z}-\bm{c}_k\sqrt{\bm{y}}}_2，
    \end{equation}
    \ENDFOR
  \end{algorithmic}
\end{algorithm}

where solution $\bm{z}_{k+1}$ to~\eqref{eq:20} is 
$A^{\dagger}\left(\bm{\bm{c}_k\sqrt{\bm{y}}}\right)$, where
\begin{equation}
  \label{eq:21}
  A^{\dagger}
  \begin{pmatrix}
    \bm{\omega}_1\\ \vdots\\\bm{\omega}_L
  \end{pmatrix}=\frac{1}{\sum_l \lvert \bm{d}_l\rvert^2}\left(\sum_{l=1}^L\bar{\bm{d}_l}\cdot F^*(\bm{\omega}_l)\right).
\end{equation}

The alternative projection method converges geometrically to a
solution of problem~\eqref{eq:1} under the Gaussian model with large
enough measurements, this fact is proven in
literature~\cite{Netrapalli2013}. If we apply it from a random
initialization, its overall performance is worse than optimization
approach. Figure~\ref{fig:6} illustrates this phenomenon. 
\subsection{Performance for the CDP Model}
\label{sec:perf-imag-cdp}
The test images is a complex-valued image of size $512\times 512$,
whose pixel values correspond to the complex transmission coefficients
of a collection of gold balls embedded in a medium. Its magnitude is
shown in Figure~\ref{fig:3a}. We only consider the CDP model, the stylized setup of
coded diffraction pattern, which one encounters in X-ray
crystallography and many other imaging sciences.
\subsubsection{Noise-free and Noise Measurements}
\label{sec:noise-free-meas}
 In the first experiment, we demonstrate the recovery of the image
 from noiseless measurements. We consider two different types of
 illuminations. The first type uses ten octanary masks. The reconstruction is shown in Figure~\ref{fig:3b}, It is visually
 indistinguishable from the original image. Since they are both
 complex-valued, we display only the magnitude. We also achieve
 successful recovery with eight octanary masks.

 Octanary masks may not be realizable in practice. Our second example
 uses simple random binary masks, where the entries are either $0$ or
 $1$ with equal probability. In this case, a large number of
 illuminations are required to achieve a reconstruction of comparable
 quality. The result for ten binary illuminations, one being regular
 Fourier measurement, is shown in Figure~\ref{fig:3c}.

  \begin{figure*}
   \begin{subfigure}[b]{.5\textwidth}
     \centering
\includegraphics{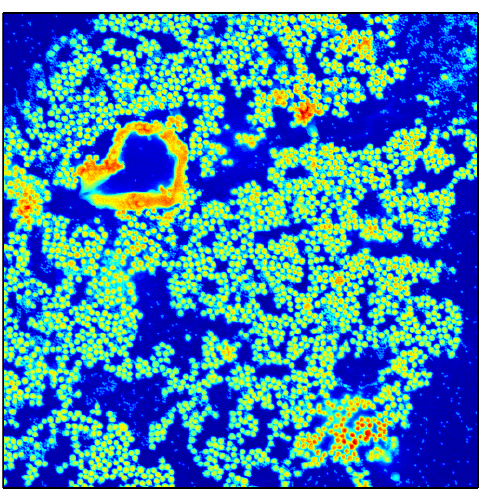}
     \caption{Original image\label{fig:3a}}
   \end{subfigure}
   \begin{subfigure}[b]{.5\textwidth}
\centering
\includegraphics{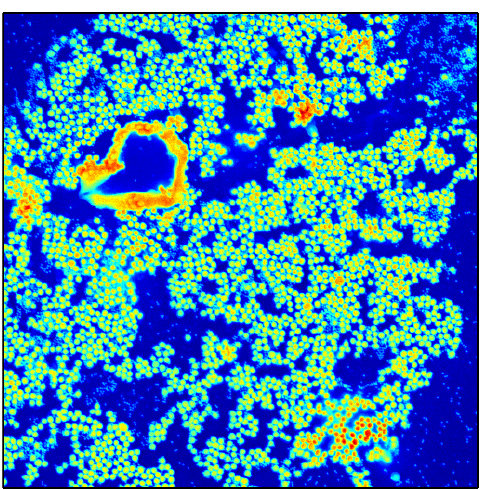}
     \caption{Reconstruction from ten octanary masks\label{fig:3b}}
   \end{subfigure}
  \begin{subfigure}[b]{.5\textwidth}
     \centering
\includegraphics{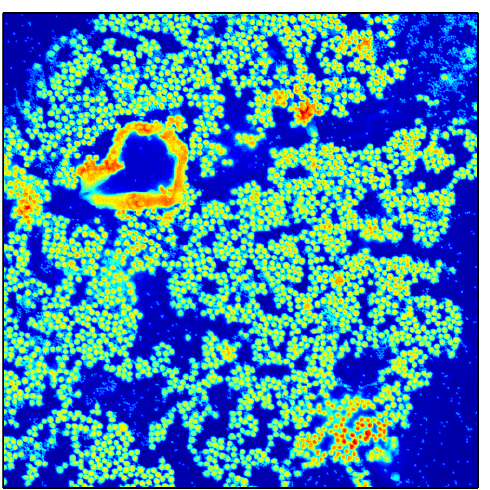}
     \caption{Reconstruction from ten binary masks\label{fig:3c}}
   \end{subfigure}
   \begin{subfigure}[b]{.5\textwidth}
\centering
\includegraphics{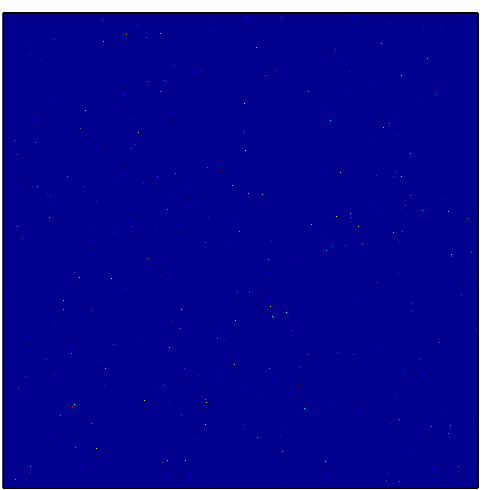}
     \caption{Error (a) and (c)\label{fig:3d}}
   \end{subfigure}
   \caption{Original gold balls image and reconstructions\label{fig:3}}
 \end{figure*}

In the second set of experiments we consider the same test image but
with noisy measurements. Ten octanary masks as before are used. Since
the main noise yields Poisson distribution resulting from the photon
counting in practice, we add
random Poisson noise to the measurements for ten different SNR levels,
ranging from 10dB to 55dB. Figure~\ref{fig:4} shows the
reconstructions from two SNR level data. Figure~\ref{fig:4a} and
~\ref{fig:4b} depict
the resulting reconstructions for low SNR case ($10$dB) and high SNR
case ($30$dB), respectively. Figure~\ref{fig:5} shows the average
relative error in dB versus the SNR\@. The error curves shows clearly
the linear behavior between SNR and relative error, it implies the
stability of the generalized phase retrieval problem~\cite{Candes2012}.
 \begin{figure*}
   \begin{subfigure}[b]{.5\textwidth}
     \centering
\includegraphics{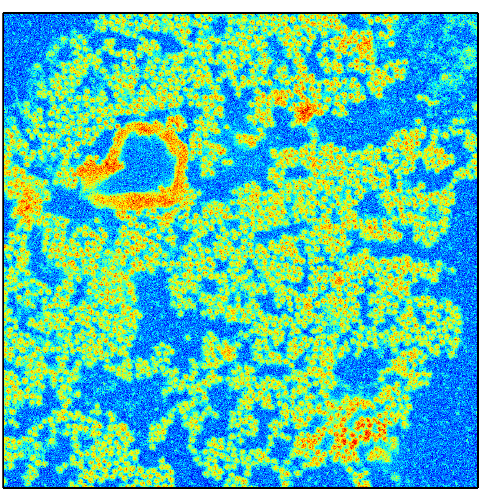}
     \caption{Low SNR =10 dB, relerr = 0.2098\label{fig:4a}}
   \end{subfigure}
   \begin{subfigure}[b]{.5\textwidth}
\centering
\includegraphics{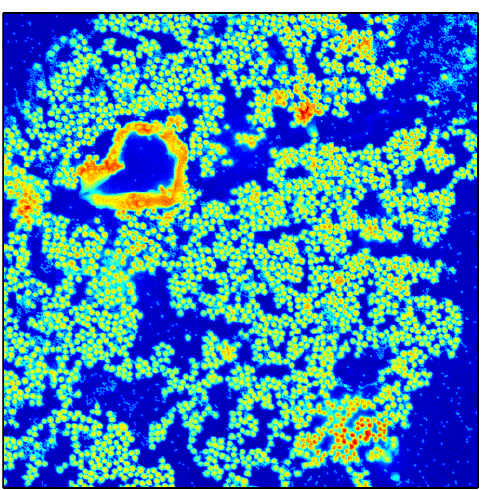}
     \caption{Low SNR =30 dB, relerr = 0.0209\label{fig:4b}}
   \end{subfigure}
   \caption{Reconstructions from noisy data\label{fig:4}}
 \end{figure*}

\begin{figure}
  \centering
  \includegraphics{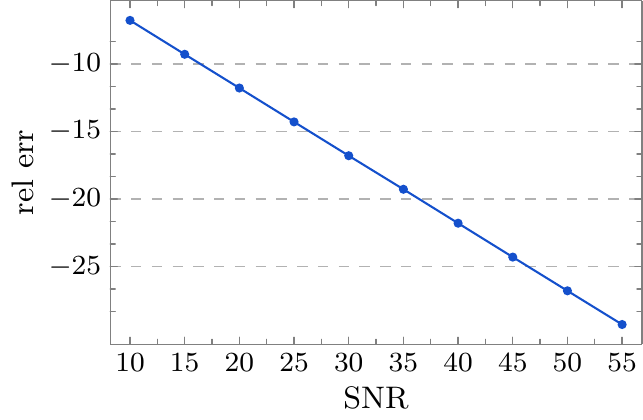}
\caption{Relative error in dB vs SNR\label{fig:5}}
\end{figure}
\subsubsection{Performance Comparison of Different Algorithms}
\label{sec:comp-line-search-1}

We study the performance of different line search method (SD, NCG and
LBFGS) and alternative projection method (AP). It shows the effectiveness
of the LBFGS to apply to the optimization problem. 
\begin{figure*}
   \begin{subfigure}[b]{.5\textwidth}
     \centering
     \includegraphics{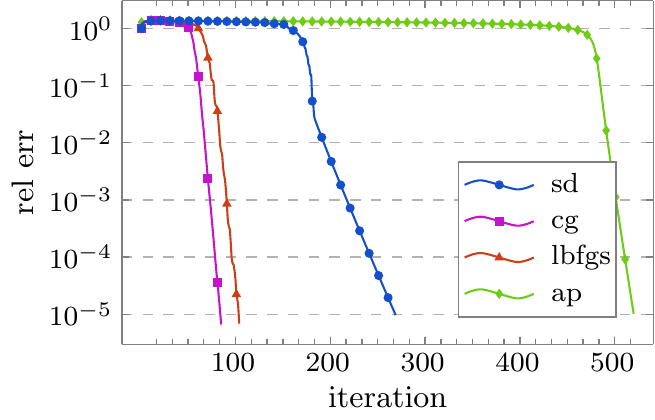}
     \caption{Performance versus different methods\label{fig:6a}}
   \end{subfigure}
   \begin{subfigure}[b]{.5\textwidth}
     \centering
     \includegraphics{fig6.pdf}
     \caption{Performance versus number of illuminations\label{fig:6b}}
   \end{subfigure}
   \caption{Comparison of different methods\label{fig:6}}
 \end{figure*}

Figure~\ref{fig:6a} depicts the relation between the relative error and
different methods. When the relative error is below $1e-5$, then the
algorithms break. The alternative projection (AP) decrease slowly at
the beginning $500$ iterations, when it is near the neighborhood of the
solution, it decreases fast. For line search methods, the nonlinear
conjugate gradient (NCG) and LBFGS have the competitive performance. In
terms of the number of iterations, NCG takes fewer iterations than
LBFGS\@. The overall calls of FFT are listed in Table~\ref{tab:1}. The
LBFGS algorithm need the fewest computations. So the LBFGS performs
best for the problem. This observation is the same as the authors'
previous paper. It is
shown that optimization approach is more robust than alternative
projection, where in 10 runs, AP fails for one run (the average is
excluded this run).

\begin{table}[H]
\caption{Total average number of FFT calls for different methods in 10 dependent runs}
\begin{center}
\begin{tabular}{ccccc}
\toprule
       Method  &      SD  &     NCG  &   LBFGS  &  AP  \\
\midrule
    FFT calls  &    24906  &     6438  &     3963  &    17527  \\
\bottomrule
\end{tabular}
\label{tab:1}
\end{center}
\end{table}

\subsubsection{Performance for Real-valued Signal}
\label{sec:real-assumptions}

With the aid of the constraint of real-valuedness, the phase retrieval
problem needs fewer diffraction patterns. Figure~\ref{fig:7} shows the
original test image and reconstructions from four octanary
illuminations. Table~\ref{tab:2} list the iterations that algorithm
returns the solution when given different number of masks. With the
real assumption, the more masks are provided, the fewer iterations
are needed. Even three masks can recover the solution. If without the
real assumption, six illuminations are needed and the number of iterations is over 1300.
 \begin{figure*}
   \begin{subfigure}[b]{.5\textwidth}
     \centering
\includegraphics{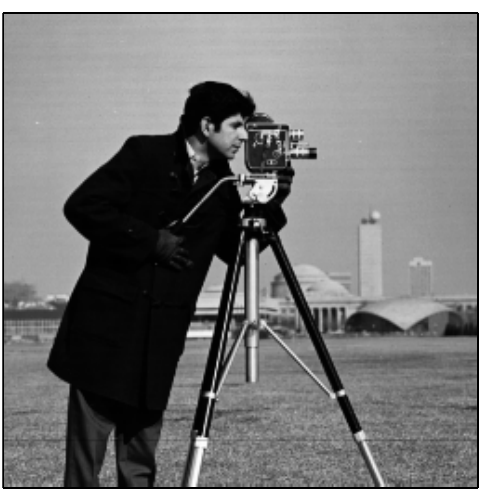}
     \caption{Original}
   \end{subfigure}
   \begin{subfigure}[b]{.5\textwidth}
\centering
\includegraphics{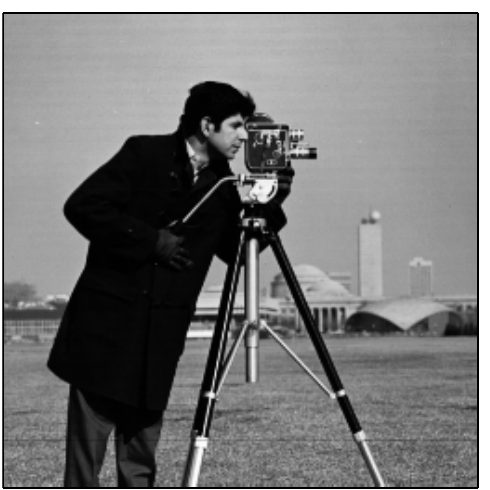}
     \caption{Reconstruction from four masks}
   \end{subfigure}
   \caption{Reconstructions of real-valued image\label{fig:7}}
 \end{figure*}

\begin{table}[H]
\caption{Cameraman ($256\times 256$) image recovery with/without the
  real-valued information of the signal using different number of
  masks: the array cell is the iteration number and - stands for
  failure of recovery within 3000 iterations.\label{tab:2}}
\begin{center}
\begin{tabular}{ccccc}
\toprule
       No.\ of masks &      3  &     4 &   5  &  6  \\
\midrule
    real &    166 &     94  &     71  &    62 \\
complex &    -  &     -  &     -  &    1319  \\
\bottomrule
\end{tabular}
\end{center}
\end{table}


\section{Conclusion and Discussion}
\label{sec:conclusion}
The generalized phase retrieval is solved by minimizing a nonconvex
least squares cost function with the advantage of better practice for two
dimensional problem. We deduced the expressions of gradient and
Hessian of the cost function by Fr\'{e}chet derivative
instead of Wirtinger derivative used in
literature~\cite{Candes2015}. Our approach is more convenient and
clear and can be applied to other complicated cost functions. Then we design
line search methods only depending gradient information for the real
functions with complex variables. The advantage is
that we do not need to carry out optimization algorithm with respect
to real and imaginary parts of variables. We proved the global
convergence of the proposed gradient decent algorithms, with arbitrary
random instead of special spectral initialization, under the
assumption that the sample vectors are drawn from complex Gaussian and
the sampling complexity $m$ is large enough ($\mathcal{O}(n\log n)$ for
Gaussian model and $\mathcal{O}(n\log^3 n)$ for CDP model).

 As demonstrated in this paper, a priori knowledge of the
real-valuedness of the
signal can mitigate the phase retrieval problem. For real-valued signal, the local convexity of the least
squares cost function in the neighborhood of the solution is also
proved. It is may be a hinder to obtain the appropriate stepsize in
the gradient descent method. For practical implementation and more
rapidly convergence rate, we apply other line search methods. The study of the 
performance of different line search methods shows that LBFGS
algorithm can efficiently and robustly solve the generalized phase retrieval
problem. The nonconvexity of the
cost function is not so scary is due to the premise that all local
minimizers associated with the cost function are global minimizers with
high probability.

Our theoretical results about global convergence are not applicable to the NCG
and LBFGS algorithms, while they show more appealing performance. The convergence of NCG and BFGS for general non-convex problems is not guaranteed~\cite{dai2002convergence}, and we cannot prove that for the generalized phase retrieval problem at present. Besides, the sampling
complexity may be further reduced to $\mathcal{O}(n)$ by borrowing the
idea of truncated Wirtinger flow in literate~\cite{Chen2015b}.
It is also 
worth investigating whether the performance of numerical algorithm
can be further improved by exploring the structures of the signal,
such as sparsity~\cite{Li2013}. Our theoretical analysis is based on
the random sampling vectors, unfortunately, the random measurements are not
easy to implement, so how to reduce the randomness is the main aspect
to study in phase retrieval community.
\section*{Acknowledgments}
The authors are indebted to Stefano Marchesini for providing us with the gold balls data set used in numerical
simulations. The first author would like to thank Ms. Chao Wang for helpful comments and suggestions on this manuscript draft. This work was supported in part by NSF grants of China (61421062, 11471024).
\appendix
 \section*{}
 \label{sec:appendix}
 \setcounter{section}{1} 

\subsection{Lemmas and Proofs}
\label{sec:lemmas}
\begin{lem}
\label{lem:2}
  For matrices $A\in\mathbb{C}^{m\times n}, C\in\mathbb{C}^{m\times
    n}$, and $B\in\mathbb{C}^{n\times m}, D\in\mathbb{C}^{n\times m}$,
  where $m\geq 2n$.
  Then we have
  \begin{multline*}
    \det\left(\lambda I-AB-CD\right)\\=\lambda^{m-2n}\det\left(
      \begin{bmatrix}
        \lambda I-BA&BC\\
        DA&\lambda I-DC
      \end{bmatrix}
\right).
  \end{multline*}
\end{lem}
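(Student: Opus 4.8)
The plan is to collapse the two rank-structured summands $AB$ and $CD$ into a single product and then invoke the classical Sylvester (Weinstein--Aronszajn) determinant identity. First I would stack the rectangular factors by setting $M=\begin{bmatrix}A & C\end{bmatrix}\in\mathbb{C}^{m\times 2n}$ and $N=\begin{bmatrix}B\\ D\end{bmatrix}\in\mathbb{C}^{2n\times m}$, so that $AB+CD=MN$. The left-hand side of the lemma is then exactly $\det(\lambda I_m-MN)$, and the whole statement reduces to the general identity $\det(\lambda I_m-MN)=\lambda^{m-2n}\det(\lambda I_{2n}-NM)$, which I would prove for arbitrary conformable $M,N$.

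To establish that identity I would form the $(m+2n)\times(m+2n)$ block matrix $\begin{bmatrix}\lambda I_m & M\\ N & I_{2n}\end{bmatrix}$ and evaluate its determinant twice by Schur complements. Eliminating the invertible bottom-right block $I_{2n}$ yields $\det(\lambda I_m-MN)$; eliminating the top-left block $\lambda I_m$, which is invertible whenever $\lambda\neq 0$, yields $\lambda^m\det(I_{2n}-\lambda^{-1}NM)=\lambda^{m-2n}\det(\lambda I_{2n}-NM)$. Equating the two expressions gives the identity for every $\lambda\neq 0$. Since both sides are polynomials in $\lambda$ and the hypothesis $m\geq 2n$ guarantees that $\lambda^{m-2n}$ is a genuine (nonnegative-power) polynomial, the equality extends to all $\lambda$, including $\lambda=0$, by polynomial continuation.

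It then remains only to reconcile $NM$ with the block matrix in the statement. A direct block multiplication gives $NM=\begin{bmatrix}BA & BC\\ DA & DC\end{bmatrix}$, hence $\lambda I_{2n}-NM=\begin{bmatrix}\lambda I-BA & -BC\\ -DA & \lambda I-DC\end{bmatrix}$, which agrees with the asserted matrix except for the signs of the two off-diagonal blocks. Conjugating by the involution $P=\diag(I,-I)$ flips precisely those signs while leaving the diagonal blocks fixed, and a similarity transformation preserves the determinant; therefore $\det(\lambda I_{2n}-NM)$ equals the determinant displayed in the lemma. I expect no genuine difficulty here: the argument is essentially bookkeeping, and the only points requiring care are the off-diagonal sign convention (handled by the $\diag(I,-I)$ conjugation) and the observation that $m\geq 2n$ is exactly what makes the prefactor $\lambda^{m-2n}$ legitimate.
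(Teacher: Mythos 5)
Your proof is correct and takes essentially the same approach as the paper: stack the factors into $\widetilde{A}=\begin{bmatrix}A & C\end{bmatrix}\in\mathbb{C}^{m\times 2n}$ and $\widetilde{B}=\begin{bmatrix}B\\ D\end{bmatrix}\in\mathbb{C}^{2n\times m}$ and apply the identity $\det(\lambda I-\widetilde{A}\widetilde{B})=\lambda^{m-2n}\det(\lambda I-\widetilde{B}\widetilde{A})$, which the paper merely cites while you prove it via Schur complements and polynomial continuation. Your treatment is in fact slightly more complete: the paper's own proof ends with off-diagonal blocks $-BC$ and $-DA$, which differ in sign from the lemma's statement, and your conjugation by $\diag(I,-I)$ is precisely the omitted bookkeeping step that reconciles the two determinants.
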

\begin{proof}
  Consider matrices 
  \begin{equation*}
    \widetilde{A}=
    \begin{bmatrix}
      A&C
    \end{bmatrix}\in\mathbb{C}^{m\times 2n}\qquad \widetilde{B}=
    \begin{bmatrix}
      B\\D
    \end{bmatrix}\in\mathbb{C}^{2n\times m}
  \end{equation*}
According to the equality $\det(\lambda I-\widetilde{A}
\widetilde{B})=\lambda^{m-2n}\det(\lambda I-\widetilde{B}
\widetilde{A})$, we have
\begin{multline*}
    \det(\lambda I-\widetilde{A}
\widetilde{B})=\det\left(\lambda I-AB-CD\right)\\=\lambda^{m-2n}\det\left(
      \begin{bmatrix}
        \lambda I-BA&-BC\\
        -DA&\lambda I-DC
      \end{bmatrix}
\right)\\=\lambda^{m-2n}\det(\lambda I-\widetilde{B}
\widetilde{A}).
  \end{multline*}
\end{proof}

\begin{proof}[Proof of Lemma~\ref{lem:3}]
  we denote $\mathbb{E}[\nabla^2 f(\bm{x})]$ by $A$, the characteristic polynomial of the matrix $A$ is
  \begin{multline*}
    P(\lambda)=\det\left(\lambda I-A\right)\\=\det\left(\left(\lambda-\norm{x}^2\right)I-\frac{3}{2}\bm{a}\bm{a}^*-\frac{1}{2}\bm{b}\bm{b}^*\right),
  \end{multline*}
where $\bm{a}=
\begin{bmatrix}
  \bm{x}\\\bar{\bm{x}}
\end{bmatrix}\in\mathbb{C}^{2n}
$ and $\bm{b}=i
\begin{bmatrix}
  \bm{x}\\-\bar{\bm{x}}
\end{bmatrix}\in\mathbb{C}^{2n}
$.

By Lemma~\ref{lem:2}, we have
\begin{align*}
  P(\lambda)&=\det\left((\lambda-\norm{x}^2)I-\frac{3}{2}\bm{a}\bm{a}^*-\frac{1}{2}\bm{b}\bm{b}^*\right)\\
  &\hspace*{-1cm}=(\lambda-\norm{\bm{x}}^2)^{2n-2}\det\left((\lambda-\norm{\bm{x}}^2)I-\begin{bmatrix}
      3\norm{\bm{x}}^2&0\\
      0&-\norm{\bm{x}}^2
    \end{bmatrix}
\right)\\
  &\hspace*{-1cm}=(\lambda-\norm{\bm{x}}^2)^{2n-2}\lambda(\lambda-4\norm{\bm{x}}^2).
\end{align*}
So the eigenvalues are directly obtained. The proof for real case is similar.
\end{proof}

\begin{proof}[Proof of Lemma~\ref{lem:8}]
  According the expression of gradient
  \begin{equation*}
     \nabla f(\bm{z}) =\frac{1}{m}\sum_{r=1}^m \left(\lvert
                       \bm{a}_r^*\bm{z}\rvert^2-\lvert \bm{a}_r^*\bm{x}\rvert^2\right)\bm{a}_r\bm{a}_r^*\bm{z},
  \end{equation*}
and the expectation
\begin{equation*}
  \mathbb{E}[\lvert\bm{a}_r^*\bm{z}\rvert^2\bm{a}_r\bm{a}_r^*]=\norm{\bm{z}}^2I+\bm{z}\bm{z}^*,\quad
  \forall \bm{z}\in\mathbb{C}^n.
\end{equation*}
So
\begin{align*}
  \mathbb{E}[\nabla
  f(\bm{z})]&=(\norm{\bm{z}}^2I+\bm{z}\bm{z}^*)\bm{z}-(\norm{\bm{x}}^2I+\bm{x}\bm{x}^*)\bm{z}\\
&=(2\norm{\bm{z}}^2-\norm{\bm{x}}^2)\bm{z}-(\bm{x^*}\bm{z})\bm{x}.
\end{align*}
This completes the proof.
\end{proof}

\begin{proof}[Proof of Lemma~\ref{lem:9}]
  Let $\bm{z}=\bm{x}+t\bm{h}$ such that
  $\re(\bm{x}^*\bm{h})=\norm{\bm{x}},\norm{\bm{h}}=1$ and $t\geq 0$. Then
  \begin{equation*}
    \norm{\nabla f(\bm{z})-\mathbb{E}[\nabla
      f(\bm{z})]}=\max_{\bm{u}\in\mathbb{C}^n,\norm{\bm{u}}=1}\left\lvert\left\langle\bm{u}, \nabla f(\bm{z})-\mathbb{E}[\nabla
      f(\bm{z})]\right\rangle\right\rvert.
  \end{equation*}
By calculus, we have
\begin{align}
  \left\langle \bm{u}, \nabla
  f(\bm{z})\right\rangle\hspace*{-1cm}&\\
                                      &=\frac{1}{m}\sum_{r=1}^m
                          t\bm{u}^*\left(\lvert
                          \bm{a}_r^*\bm{x}\rvert^2\bm{a}_r\bm{a}_r^*\right)\bm{h}+t\bm{u}^*\left((\bm{a}_r^*\bm{x})^2\bm{a}_r\bm{a}_r^T\right)\overline{\bm{h}}\nonumber\\
&+2t^2\bm{u}^*\left(\lvert
                          \bm{a}_r^*\bm{h}\rvert^2\bm{a}_r\bm{a}_r^*\right)\bm{x}+t^2\bm{u}^*\left((\bm{a}_r^*\bm{h})^2\bm{a}_r\bm{a}_r^T\right)\overline{\bm{x}}\\
&+t^3\bm{u}^*\left(\lvert
                          \bm{a}_r^*\bm{h}\rvert^2\bm{a}_r\bm{a}_r^*\right)\bm{h}.\label{eq:770}
\end{align}
By Lemma~\ref{lem:8}, we have
\begin{multline*}
  \label{eq:772}
  \mathbb{E}[\nabla
    f(\bm{z})]=2t(t^2+3t\norm{\bm{x}}+2\norm{\bm{x}}^2)\bm{h}, \text{
      and }\\\norm{\mathbb{E}[\nabla
    f(\bm{z})]}=2t(t^2+3t\norm{\bm{x}}+2\norm{\bm{x}}^2).
\end{multline*}
Furthermore, we also have
\begin{align}
  \left\langle\bm{u},\mathbb{E}[\nabla f(\bm{z})]
  \right\rangle\hspace*{-1cm}&\\
                                      &=t\bm{u}^*\left(\norm{\bm{x}}^2I+\bm{x}\bm{x}^*\right)\bm{h}+t\bm{u}^*\left(2\bm{x}\bm{x}^T\right)\overline{\bm{h}}\nonumber\\
&+2t^2\bm{u}^*\left(\norm{\bm{h}}^2I+\bm{h}\bm{h}^*\right)\bm{x}+t^2\bm{u}^*\left(2\bm{h}\bm{h}^T\right)\overline{\bm{x}}\\&+t^3\bm{u}^*\left(\norm{\bm{h}}^2I+\bm{h}\bm{h}^*\right)\bm{h},\label{eq:771}
\end{align}
where we use the equality
$2\norm{\bm{h}}^2\bm{u}^*\bm{h}=\bm{u}^*\left(\norm{\bm{h}}^2I+\bm{h}\bm{h}^*\right)\bm{h}$.

Combining the two equalities \eqref{eq:770} and \eqref{eq:771} together and using triangular inequality and Lemma~\ref{lem:4} give
\begin{align*}
  \left\lvert\Bigl\langle\bm{u}, \nabla f(\bm{z})-\mathbb{E}[\nabla
      f(\bm{z})]\Bigr\rangle\right\rvert\hspace*{-3cm}&\\
&\leq
                                          t\left\lvert\bm{u}^*\Bigl
                                          (\frac{1}{m}\sum_{r=1}^m\lvert\bm{a}_r^*\bm{x}\rvert^2\bm{a}_r\bm{a}_r^*-\left(\norm{\bm{x}}^2I+\bm{x}\bm{x}^*\right)\Bigr
                                          )\bm{h}\right\rvert\\
&\phantom{==}{}+t\left\lvert\bm{u}^*\Bigl
                                          (\frac{1}{m}\sum_{r=1}^m(\bm{a}_r^*\bm{x})^2\bm{a}_r\bm{a}_r^T-2\bm{x}\bm{x}^T\Bigr
                                          )\overline{\bm{h}}\right\rvert\\
&\phantom{==}{}+2t^2\left\lvert\bm{u}^*\Bigl
                                          (\frac{1}{m}\sum_{r=1}^m\lvert\bm{a}_r^*\bm{h}\rvert^2\bm{a}_r\bm{a}_r^*-\left(\norm{\bm{h}}^2I+\bm{h}\bm{h}^*\right)\Bigr
                                          )\bm{x}\right\rvert\\
&\phantom{==}{}+t^2\left\lvert\bm{u}^*\Bigl
                                          (\frac{1}{m}\sum_{r=1}^m(\bm{a}_r^*\bm{h})^2\bm{a}_r\bm{a}_r^T-2\bm{h}\bm{h}^T\Bigr
                                          )\overline{\bm{x}}\right\rvert\\
&\phantom{==}{}+t^3\left\lvert\bm{u}^*\Bigl
                                          (\frac{1}{m}\sum_{r=1}^m\lvert\bm{a}_r^*\bm{h}\rvert^2\bm{a}_r\bm{a}_r^*-\left(\norm{\bm{h}}^2I+\bm{h}\bm{h}^*\right)\Bigr
                                          )\bm{h}\right\rvert\\
                                          &\leq
                                          2t\delta\norm{\bm{x}}^2+3t^2\delta\norm{\bm{x}}+t^3\delta\\
&=\frac{\delta}{2}\norm{\mathbb{E}[\nabla
      f(\bm{z})]}.
\end{align*}

Take $t=\dist(\bm{z},\bm{x})\leq \norm{\bm{x}}/2$,
then
\begin{equation*}
  t^2+3t\norm{\bm{x}}+2\norm{\bm{x}}^2\leq
  \frac{15}{4}\norm{\bm{x}}^2\leq 4\norm{\bm{x}}^2.
\end{equation*}
So
\begin{equation*}
  \left\lvert\Bigl\langle\bm{u}, \nabla f(\bm{z})-\mathbb{E}[\nabla
      f(\bm{z})]\Bigr\rangle\right\rvert \leq 4\delta\dist(\bm{z},\bm{x})\norm{\bm{x}}^2.
\end{equation*}
\end{proof}

\begin{proof}[Proof of Lemma~\ref{lem:10}]
Since we have
\begin{equation*}
  \mathbb{E}[\nabla
    f(\bm{z})]=2t(t^2+3t\norm{\bm{x}}+2\norm{\bm{x}}^2)\bm{h},
\end{equation*}
it is obvious that $\mathbb{E}[\nabla
 f(\bm{z})]$ and $\bm{h}$ share the same direction. Furthermore,
we have
\begin{multline*}
    \norm{\mathbb{E}[\nabla
    f(\bm{z})]}=2t(t^2+3t\norm{\bm{x}}+2\norm{\bm{x}}^2)\\=2t\left((t+\frac{3}{2}\norm{\bm{x}})^2-\frac{1}{4}\norm{\bm{x}}^2\right)\geq
    0,
  \end{multline*}
so  $\mathbb{E}[\nabla f(\bm{z})]=\bm{0}$ if and only if $\bm{z}$ is a
 solution of the phase retrieval problem \eqref{eq:1}. Furthermore,
by equation~\eqref{eq:23}, it follows that
\begin{equation*}
  \re\left\langle \nabla
      f(\bm{z}),\bm{z}-\bm{x}e^{i\phi(\bm{z})}\right\rangle\geq \sqrt{1-\frac{\delta^2}{4}}
    \norm{\nabla f(\bm{z})}\norm{\bm{z}-\bm{x}e^{i\phi(\bm{z})}},
\end{equation*}
so the angle between $\nabla f(\bm{z})$ and
  $\bm{z}-\bm{x}e^{i\phi(\bm{z})}$ is below $\arcsin(\delta/2)$.
\end{proof}

\subsection{Proof of Theorem~\ref{thm:main}}
\label{sec:proof-theorem}
\begin{proof}
According to Lemma~\ref{lem:10}, the angle between
$\bm{z}_k-\bm{x}e^{i\phi(\bm{z}_k)}$ and $\nabla f(\bm{z}_k)$ is below
$\pi/2$. It is obvious that if $0<\alpha_k<2\re\langle\bm{z}_k-\bm{x}e^{i\phi(\bm{z}_k)},\nabla f(\bm{z}_k)\rangle/\norm{\nabla
      f(\bm{z}_k)}$, according to the definition of
    $\phi(\bm{z}_{k+1})$, we have
    \begin{equation*}
      \norm{\bm{z}_{k+1}-\bm{x}e^{i\phi(\bm{z}_{k+1})}}\leq \norm{\bm{z}_{k+1}-\bm{x}e^{i\phi(\bm{z}_k)}}<\norm{\bm{z}_{k}-\bm{x}e^{i\phi(\bm{z}_k)}},
    \end{equation*}
i.e., the strict descent property $\dist(\bm{z}_{k+1},\bm{x})< \dist(\bm{z}_{k},\bm{x})$ holds.

So we have to bound the
$\norm{\bm{z}_{k}-\bm{x}e^{i\phi(\bm{z}_k)}}$. For brevity, we omit the
subscript $k$ without ambiguity and assume the $\phi(\bm{z})=0$.  Let
$\bm{z}=\bm{x}+t\bm{h}$ such that
$\re(\bm{x}^*\bm{h})=\norm{\bm{x}},\norm{\bm{h}}=1$ and $t\geq 0$, so
$\dist(\bm{z},\bm{x})=t$. According to
  \eqref{eq:23} in Lemma~\ref{lem:9}, we have
  \begin{equation*}
    \norm{\nabla f(\bm{z})}\leq
    (1+\frac{\delta}{2})\norm{\mathbb{E}[\nabla f(\bm{z})]}\leq (2+\delta)t(t^2+3t\norm{\bm{x}}+2\norm{\bm{x}}^2).
  \end{equation*}
We have
\begin{equation*}
  \norm{\nabla f(\bm{z})}\leq
  \begin{cases}
    6(2+\delta)\norm{\bm{x}}^2t,& t\leq \norm{\bm{x}};\\
    6(2+\delta)t^3,& t>\norm{\bm{x}}.
  \end{cases}
\end{equation*}
Consequently, we have
\begin{equation*}
  t\geq
  \begin{cases}
    \frac{\norm{\nabla f(\bm{z})}}{6(2+\delta)\norm{\bm{x}}^2},& t\leq \norm{\bm{x}};\\
    \sqrt[3]{\frac{\norm{\nabla f(\bm{z})}}{6(2+\delta)}},& t>\norm{\bm{x}}.
  \end{cases}
\end{equation*}
Thus, according to Lemma~\ref{lem:10}, we have
\begin{multline*}
  2\re\langle\bm{z}-\bm{x}e^{i\phi(\bm{z})},\nabla f(\bm{z})\rangle/\norm{\nabla
      f(\bm{z})}\\\geq 2\sqrt{1-\frac{\delta^2}{4}}\norm{\bm{z}-\bm{x}e^{i\phi(\bm{z})}}.
\end{multline*}
It follows that
\begin{multline*}
  0<\alpha_k\\<\operatorname{min}\left\{2\sqrt{1-\frac{\delta^2}{4}}\frac{\norm{\nabla f(\bm{z}_k)}}{6(2+\delta)\norm{\bm{x}}^2},2\sqrt{1-\frac{\delta^2}{4}}\sqrt[3]{\frac{\norm{\nabla f(\bm{z}_k)}}{6(2+\delta)}}\right\}.
\end{multline*}

For the proof of convergence, the stepsize $\alpha_k$ can not be too small. We denote the intersection of the polynomial of
degree three $P_2(t)=(2-\delta)t(t^2+3t\norm{\bm{x}}+2\norm{\bm{x}}^2)$ and
$y=\norm{\nabla f(\bm{z}_k)}$ as $t_2$. We choose
$\alpha_k=\sqrt{1-\delta^2/4}t_1$. The proof of convergence is
followed. According to
  \eqref{eq:23} in Lemma~\ref{lem:9}, since we have the following inequality
\begin{equation}
\label{eq:1000}
  (1-\frac{\delta}{2})\norm{\mathbb{E}[\nabla f(\bm{z}_k)]}\leq
  \norm{\nabla f(\bm{z}_k)}\leq (1+\frac{\delta}{2})\norm{\mathbb{E}[\nabla f(\bm{z}_k)]},
\end{equation}
where the quality
\begin{equation*}
  \norm{\mathbb{E}[\nabla f(\bm{z}_k)]}=2t(t^2+3t\norm{\bm{x}}+2\norm{x}^2).
\end{equation*}
It is obvious that from \eqref{eq:1000} we can bound the variable $t$
for a given $\norm{\nabla f(\bm{z}_k)}$.

From Figure \ref{fig:1}, we have that
\begin{equation*}
  \frac{\lvert ST\rvert}{\lvert OT\rvert}\leq \frac{\lvert ST\rvert}{\lvert OT_1\rvert}.
\end{equation*}
We take $S=(t_1\cos \beta,0)$ and $T=(t\cos \beta,t\sin \beta)$, where $t\in[t_1,t_2]$. It follows that
\begin{align}
  \frac{\lvert ST\rvert^2}{\lvert OT_1\rvert^2}&=\frac{(t-t_1)^2\cos^2\beta+t^2\sin^2\beta}{t_1^2}\nonumber\\
  &=\left(\frac{t-t_1}{t_1}\right)^2\cos^2\beta+\left(1+\frac{t-t_1}{t_1}\right)^2\sin^2\beta\nonumber\\
&\leq \left(\frac{t-t_1}{t_1}\right)^2\cos^2\beta+2\sin^2\beta+2\left(\frac{t-t_1}{t_1}\right)^2\sin^2\beta\nonumber\\
&\leq 2\left(\frac{t-t_1}{t_1}\right)^2+2\sin^2\beta.\label{eq:1001}
\end{align}
Since the $P_1$ and $P_2$ are both convex when $t\geq 0$, we have the inequality
\begin{equation*}
  \frac{t-t_1}{t_1}\leq \frac{2\delta (t_1^2+3t_1+2)}{(2-\delta)(3t_1^2+6t_1+2)}\colon = f(t_1),
\end{equation*}
where we assume $\norm{\bm{x}}=1$ without loss of generality. 
It is obvious that $f(t_1)$ monotonously decreases first and then
monotonously increases. As $f(t_1)\to 2\delta/(3(2-\delta))$,
$t_1\to\infty$, we have
\begin{multline*}
  \frac{\lvert ST\rvert^2}{\lvert OT_1\rvert^2}\leq 2\left(\frac{2\delta}{2-\delta}\right)^2+2\sin^2\beta\\\leq 2\left(\frac{2\delta}{2-\delta}\right)^2+\frac{\delta^2}{2}\leq \left(\frac{2\sqrt{2}\delta}{2-\delta}+\frac{\delta}{\sqrt{2}}\right)^2.
\end{multline*}
That is
\begin{equation*}
  \lvert
  ST\rvert\leq\left(\frac{2\sqrt{2}\delta}{2-\delta}+\frac{\delta}{\sqrt{2}}\right)
  \lvert OT_1\rvert\leq \left(\frac{2\sqrt{2}\delta}{2-\delta}+\frac{\delta}{\sqrt{2}}\right)
  \lvert OT\rvert,
\end{equation*}
i.e., the convergence is ensured. It yields that
\begin{equation*}
  \dist(\bm{z}_{k+1},\bm{x})\leq\left(\frac{2\sqrt{2}\delta}{2-\delta}+\frac{\delta}{\sqrt{2}}\right)\dist(\bm{z}_{k},\bm{x}).
\end{equation*}
Since the $g(\delta)=\frac{6\delta-\delta^2}{\sqrt{2}(2-\delta)}$
monotonously increases, we can find small enough $\delta$ to satisfy the condition $g(\delta)<1$, such as $\delta\leq 0.2$, then $g(\delta)<0.5$.
\end{proof}
\begin{figure}
  \centering
  \includegraphics[width=.45\textwidth]{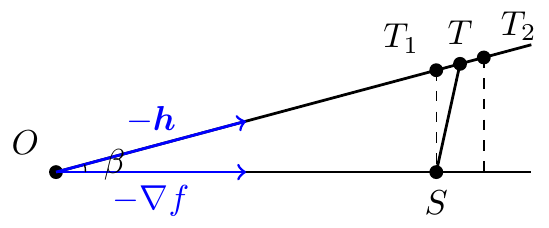}
  \caption{Illustration of the descent property: point $T$ is between
    $T_1$ and $T_2$, and $\lvert OT_1\rvert=t_1$ and $\lvert
    OT_2\rvert=t_2$, the step size $\lvert OS\rvert=t_1\sin \beta$,
    where $\beta$ is the angle between $-\bm{h}$ and $-\nabla f$.}
  \label{fig:1}
\end{figure}
\begin{proof}[Proof of Corollary~\ref{cor:cor}]
  If we have $t\leq \frac{1}{5}\norm{\bm{x}}$, then it follows that
\begin{equation*}
  \norm{\nabla f(\bm{z}_k)}\leq (2+\delta)t(t^2+3t\norm{\bm{x}}+2\norm{\bm{x}}^2)\leq (2+\delta)\frac{66}{25}\norm{\bm{x}}^2t.
\end{equation*}
Then we have
\begin{equation*}
  t\geq \frac{25\norm{\nabla f(\bm{z}_k)}}{66(2+\delta)\norm{\bm{x}}^2}.
\end{equation*}

We have the estimations
\begin{equation*}
  \frac{t-t_1}{t_1}\leq\frac{66(2+\delta)/25-(2-\delta)(t_1^2+3t_1+2)}{(2-\delta)(3t_1^2+6t_1+2)}\leq \frac{58\delta+16}{25(2-\delta)},
\end{equation*}
and
\begin{equation*}
  \frac{\lvert ST\rvert^2}{\lvert OT_1\rvert^2}\leq 2\left(\frac{58\delta+16}{25(2-\delta)}\right)^2+\frac{\delta^2}{2}\leq \left(\frac{\sqrt{2}(58\delta+16)}{25(2-\delta)}+\frac{\delta}{\sqrt{2}}\right)^2.
\end{equation*}
For the case $\dist(\bm{z}_k,\bm{x})\geq 2\norm{\bm{x}}$, the proof is
similar, we omit it.
\end{proof}

\subsection{Proof of Local Convexity}
\label{sec:proof-local-conv}
\begin{proof}[Proof of Theorem~\ref{thm:2}]
Taking an arbitrary $\bm{z}\in\mathbb{R}^n$, we let
  $\bm{z}=\bm{x}+t\bm{w}$ ($\norm{w}=1,t\geq 0$). According to
  Lemma~\ref{lem:5}, it yields
  \begin{multline*}
    \mathbb{E}[\nabla^2 f(\bm{z})]\\
    =\mathbb{E}[\nabla^2
                                    f(\bm{x}+t\bm{w})]
=\left(I+4\bm{w}\bm{w}^T\right)t^2\\+4\left((\bm{x}^T\bm{w})I+\bm{x}\bm{w}^T+\bm{w}\bm{x}^T\right)t+\left(\norm{\bm{x}}^2I+3\bm{x}\bm{x}^T\right).
  \end{multline*}
Observe that
\begin{equation*}
  (\bm{x}^T\bm{w})I+\bm{x}\bm{w}^T+\bm{w}\bm{x}^T\succeq -3\norm{\bm{x}}I
\end{equation*}
and
\begin{equation*}
  \norm{\bm{x}}^2I+3\bm{x}\bm{x}^T\succeq \norm{\bm{x}}^2I.
\end{equation*}
So, when $t\leq \norm{\bm{x}}/12$, the expectation
$\mathbb{E}[\nabla^2 f(\bm{z})]\succeq 0$. This completes the proof.
\end{proof}
\begin{lem}
\label{lem:6}
  For a given vector $\bm{z}\in\mathbb{R}^n$, we parameterize it by
  $\bm{z}=\bm{x}+t\bm{w}$, where $\bm{w}=1,t\geq 0$. Then according to
  the chain rule of Wirtinger derivatives, we have
  \begin{equation*}
    f(\bm{z})=f(\bm{x}+t\bm{w})=\frac{1}{2m}\norm{\lvert A(\bm{x}+t\bm{w})\rvert^2-\bm{y}}^2.
  \end{equation*}
Its gradient and Hessian with respect to $t$ are:
\begin{subequations}
  \begin{align*}
    f'(t)&\\
    &=\frac{1}{m}\sum_{r=1}^m\begin{bmatrix}
         \bm{w}^T&\bm{w}^*
       \end{bmatrix}
                   \begin{bmatrix}
         \left(\lvert
                       \bm{a}_r^*(\bm{x}+t\bm{w})\rvert^2-y_r\right)\overline{(\bm{a}_r^*(\bm{x}+t\bm{w}))}\overline{\bm{a}_r} \\       
 \left(\lvert
                       \bm{a}_r^*(\bm{x}+t\bm{w})\rvert^2-y_r\right)(\bm{a}_r^*(\bm{x}+t\bm{w}))\bm{a}_r
                   \end{bmatrix}\\
&=\frac{2}{m}\sum_{r=1}^m\re\Bigl(\left(\lvert
           \bm{a}_r^*(\bm{x}+t\bm{w})\rvert^2-y_r\right)\bm{a}_r^*(\bm{x}+t\bm{w})\bm{w}^*\bm{a}_r\Bigr),\\
f''(t)&=\frac{1}{m}\sum_{r=1}^m
       \begin{bmatrix}
         \bm{w}^*&\bm{w}^T
       \end{bmatrix}\\
                   &\hspace*{-1cm}\begin{bmatrix}
                     \left(2\lvert
                       \bm{a}_r^*(\bm{x}+t\bm{w})\rvert^2-y_r\right)\bm{a}_r\bm{a}_r^*&
                     \left(\bm{a}_r^*(\bm{x}+t\bm{w})\right)^2\bm{a}_r\bm{a}_r^T\\
                     \left(\overline{\bm{a}_r^*(\bm{x}+t\bm{w})}\right)^2\overline{\bm{a}_r}\bm{a}_r^*&\left(2\lvert
                       \bm{a}_r^*(\bm{x}+t\bm{w})\rvert^2-y_r\right)\overline{\bm{a}_r}\bm{a}_r^T
                   \end{bmatrix}                                                                   \begin{bmatrix}
                                  \bm{w}\\\overline{\bm{w}}         
               \end{bmatrix}\\
&=\frac{2}{m}\sum_{r=1}^m\re\Bigl(\left(2\lvert
  \bm{a}_r^*(\bm{x}+t\bm{w})\rvert^2-y_r\right)\lvert
  \bm{a}_r^*\bm{w}\rvert^2\\
&+\left(\bm{a}_r^*(\bm{x}+t\bm{w})\right)^2\left(\bm{a}_r^T\overline{\bm{w}}\right)^2\Bigr).
\end{align*}
\end{subequations}
\end{lem}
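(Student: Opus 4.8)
The plan is to recognize $f(t):=f(\bm{x}+t\bm{w})$ as the composition of the affine path $t\mapsto\bm{z}(t)=\bm{x}+t\bm{w}$ with the objective $f$, and to read $f'$ and $f''$ off the Fr\'{e}chet-derivative formulas already established in Section~\ref{sec:gradient-hessian}. Since this path has constant velocity $\bm{w}$ and vanishing acceleration, the chain rule collapses the scalar derivatives to directional derivatives: $f'(t)=Df[\bm{z}(t)](\bm{w})$ and $f''(t)=D^2f[\bm{z}(t)](\bm{w},\bm{w})$. Comparing with the Taylor expansion~\eqref{eq:8} these equal $2\re\langle\bm{w},\nabla f(\bm{z})\rangle$ and $2\re\langle\bm{w},\mathcal{H}_f[\bm{z}](\bm{w})\rangle$, with $\nabla f$ and $\mathcal{H}_f$ given by~\eqref{eq:9}; in the stacked real representation this is precisely $[\bm{w}^*\ \bm{w}^T]\,\nabla^2 f(\bm{z})\,[\bm{w}^T\ \bm{w}^*]^T$ with the block matrix from~\eqref{eq:10}, which is exactly the first (matrix) line of each displayed identity in the lemma.

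Alternatively, for a self-contained argument, I would differentiate in $t$ directly. Writing $u_r(t)=\bm{a}_r^*(\bm{x}+t\bm{w})=\bm{a}_r^*\bm{x}+t\,\bm{a}_r^*\bm{w}$, the velocity $u_r'(t)=\bm{a}_r^*\bm{w}$ is constant and $f(t)=\frac{1}{2m}\sum_r(\lvert u_r\rvert^2-y_r)^2$. Using $\lvert u_r\rvert^2=u_r\overline{u_r}$ and $\frac{\rd}{\rd t}\lvert u_r\rvert^2=2\re(\overline{u_r}\,u_r')$, the first derivative is $f'(t)=\frac{1}{m}\sum_r(\lvert u_r\rvert^2-y_r)\cdot 2\re(\overline{u_r}\,u_r')$, and since $\re(\overline{u_r}\,u_r')=\re(u_r\overline{u_r'})$ with $\overline{u_r'}=\bm{w}^*\bm{a}_r$ this is the claimed $\frac{2}{m}\sum_r\re\bigl((\lvert u_r\rvert^2-y_r)u_r\,\bm{w}^*\bm{a}_r\bigr)$. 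Differentiating once more, the only $t$-dependent factor is $(\lvert u_r\rvert^2-y_r)u_r=u_r^2\overline{u_r}-y_r u_r$, whose derivative is $(2\lvert u_r\rvert^2-y_r)u_r'+u_r^2\overline{u_r'}$; pairing against the constant $\overline{u_r'}=\bm{w}^*\bm{a}_r$ and taking real parts produces the two terms $(2\lvert u_r\rvert^2-y_r)\lvert\bm{a}_r^*\bm{w}\rvert^2$ and $(\bm{a}_r^*\bm{z})^2(\bm{a}_r^T\overline{\bm{w}})^2$ stated for $f''(t)$, upon using $\overline{u_r'}u_r'=\lvert\bm{a}_r^*\bm{w}\rvert^2$ and $(\overline{u_r'})^2=(\bm{a}_r^T\overline{\bm{w}})^2$.

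The remaining work is purely notational reconciliation, and I expect this to be the only place needing care rather than a genuine obstacle. I would record the scalar identities $\bm{a}_r^*\bm{w}=\bm{w}^T\overline{\bm{a}_r}$, $\bm{w}^*\bm{a}_r=\bm{a}_r^T\overline{\bm{w}}=\overline{\bm{a}_r^*\bm{w}}$, and $\lvert\bm{a}_r^*\bm{w}\rvert^2=(\bm{w}^*\bm{a}_r)(\bm{a}_r^*\bm{w})$, and check that the two entries of each stacked vector and the two diagonal (resp.\ off-diagonal) blocks of the Hessian are complex conjugates of one another, so that the row $[\bm{w}^T\ \bm{w}^*]$ or $[\bm{w}^*\ \bm{w}^T]$ collapses a quantity and its conjugate into twice a real part. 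The factor $2/m$ appearing in both formulas is the product of the chain-rule factor from squaring against the leading $\tfrac{1}{2m}$ and the factor $2$ hidden in $\frac{\rd}{\rd t}\lvert u_r\rvert^2=2\re(\cdot)$. No probabilistic input is used; the statement is a deterministic calculus identity, and the real-valuedness of $\bm{z}$ plays no essential role beyond fixing the ambient space.
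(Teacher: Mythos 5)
Your proposal is correct and takes essentially the paper's route: the paper states this lemma with no separate proof, justifying it in-line by the chain rule of Wirtinger derivatives applied to the gradient and Hessian expressions of Section~\ref{sec:gradient-hessian}, which is precisely your first paragraph recast in the Fr\'{e}chet-derivative language ($f'(t)=Df[\bm{z}(t)](\bm{w})$, $f''(t)=D^2f[\bm{z}(t)](\bm{w},\bm{w})$) that the paper itself declares equivalent. Your second paragraph's direct differentiation of $u_r(t)=\bm{a}_r^*(\bm{x}+t\bm{w})$ is a correct, self-contained verification that the paper omits entirely, and it is a useful supplement since it independently confirms both displayed identities, the consistency of the matrix and real-part forms, and the factor $2/m$.
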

\begin{proof}[Proof of Theorem~\ref{thm:3}]
By Lemma~\ref{lem:6}, we have
  \begin{multline*}
  \frac{1}{2}
\begin{bmatrix}
       \bm{w}^*&\bm{w}^T
       \end{bmatrix}\nabla^2 f(\bm{z})\begin{bmatrix}
                                  \bm{w}\\\overline{\bm{w}}         
               \end{bmatrix}
    =\frac{1}{2}f''(t)\\=\frac{1}{m}\sum_{r=1}^m \re\Bigl(3\lvert \bm{a}_r^*\bm{w}\rvert^4t^2+4\re\left(\bm{w}^*\bm{a}_r\bm{a}_r^*\bm{x}\right)\lvert \bm{a}_r^*\bm{w}\rvert^2 t\Bigr.\\\Bigl.+2(\bm{w}^*\bm{a}_r\bm{a}_r^*\bm{x})\lvert \bm{a}_r^*\bm{w}\rvert^2 t\Bigr)+\frac{1}{2}f''(0)\\
=\frac{1}{m}\sum_{r=1}^m3\lvert \bm{a}_r^*\bm{w}\rvert^4t^2+6\re\left(\bm{w}^*\bm{a}_r\bm{a}_r^*\bm{x}\right)\lvert \bm{a}_r^*\bm{w}\rvert^2 t\\+ \lvert\bm{a}_r^*\bm{x}\rvert^2\lvert \bm{a}_r^*\bm{w}\rvert^2+\re\Bigl(\left(\bm{w}^*\bm{a}_r\bm{a}_r^*\bm{x}\right)^2\Bigr)\\
=\frac{1}{m}\sum_{r=1}^m3\lvert \bm{a}_r^*\bm{w}\rvert^4t^2+6\re\left(\bm{w}^*\bm{a}_r\bm{a}_r^*\bm{x}\right)\lvert \bm{a}_r^*\bm{w}\rvert^2 t\\+2\Bigl(\re\left(\bm{w}^*\bm{a}_r\bm{a}_r^*\bm{x}\right)\Bigr)^2,
  \end{multline*}
where we use the equality $2\re(c)^2=\lvert c\rvert^2+\re(c^2)$.

We can denote it 
\begin{multline*}
   \frac{1}{2}
\begin{bmatrix}
       \bm{w}^*&\bm{w}^T
       \end{bmatrix}\nabla^2 f(\bm{z})\begin{bmatrix}
                                  \bm{w}\\\overline{\bm{w}}         
               \end{bmatrix}\\
    =\frac{3}{m}\sum_{r=1}^m (A_rt+B_r)^2-
 \frac{1}{4}
\begin{bmatrix}
       \bm{w}^*&\bm{w}^T
       \end{bmatrix}\nabla^2 f(\bm{x})\begin{bmatrix}
                                  \bm{w}\\\overline{\bm{w}}         
               \end{bmatrix},
\end{multline*}
where
\begin{align*}
  A_r &= \lvert \bm{a}_r^*\bm{w}\rvert^2\\
  B_r &= \re\left(\bm{w}^*\bm{a}_r\bm{a}_r^*\bm{x}\right).
\end{align*}
Define
\begin{equation*}
  Z_r(t) = (A_rt+B_r)^2\geq 0,
\end{equation*}
according to Lemma~\ref{lem:7}, we have
\begin{align*}
  \mathbb{E}[Z_r(t)]&=\mathbb{E}\Bigl[\lvert \bm{a}_r^*\bm{w}\rvert^4t^2+2\re\left(\bm{w}^*\bm{a}_r\bm{a}_r^*\bm{x}\right)\lvert \bm{a}_r^*\bm{w}\rvert^2 t\\&+\Bigl(\re\left(\bm{w}^*\bm{a}_r\bm{a}_r^*\bm{x}\right)\Bigr)^2\Bigr]\\
&=2t^2+4\re(\bm{w}^*\bm{x})t+
 \frac{1}{4}
\begin{bmatrix}
       \bm{w}^*&\bm{w}^T
       \end{bmatrix}\mathbb{E}[\nabla^2 f(\bm{x})]\begin{bmatrix}                                 \bm{w}\\\overline{\bm{w}}         
               \end{bmatrix}.
\end{align*}
In the real case, we have $\bm{w}^*=\bm{w}^T$ and
\begin{align*}
  \bm{w}^T\nabla^2 f(\bm{z})\bm{w}&=\frac{1}{2}f''(t)\\
&=\frac{3}{m}\sum_{r=1}^mZ_r(t)-\frac{1}{2}\bm{w}^T\nabla^2 f(\bm{x})\bm{w}.
\end{align*}
Consequently, it follows that the expectation 
\begin{equation*}
  \mathbb{E}[Z_r(t)]=2t^2+4\bm{w}^T\bm{x}t+
 \frac{1}{2}
\bm{w}^T\mathbb{E}[\nabla^2 f(\bm{x})]\bm{w},
\end{equation*}
and that the variance
\begin{equation*}
  \mathbb{E}[(Z_r(t)-\mathbb{E}[Z_r(t)])^2]\leq C^2(t),
\end{equation*}
where $C(t)$ depends only on the $\bm{a}_r$.

Applying Lemma 5.4 in literature~\cite{White2015}
yields (taking $y=m\lambda/12$)
\begin{multline*}
  \mathbb{P}\Bigl[\mathbb{E}[Z_r(t)]-\frac{1}{m}\sum_{r=1}^mZ_r(t)\geq
  \frac{\lambda}{12}\Bigr]\\
\leq\operatorname{min}\Bigl\{\exp\left(-\frac{\lambda^2m}{144C^2(t)}\right),25\left(1-\Phi\left(\frac{\lambda\sqrt{m}}{12C(t)}\right)\right)\Bigr\}.
\end{multline*}
Using the well-known bound
\begin{equation*}
  1-\Phi\left(\frac{\lambda\sqrt{m}}{12C(t)}\right)<\frac{12C(t)}{\lambda\sqrt{2m\pi}}\exp\left(-\frac{\lambda^2m}{288C^2(t)}\right),
\end{equation*}
we find that if $m\geq 288\alpha\lambda^{-2}C^2(t)n$ then with
probability at least $1-e^{-\alpha n}$ we have
\begin{align*}
  \frac{1}{2}f''(t)&\geq 3\mathbb{E}[Z_r(t)]-\lambda/4-\frac{1}{2}\bm{w}^T\nabla^2 f(\bm{x})\bm{w}\\
&\geq 6t^2+12\bm{w}^T\bm{x}t+
\bm{w}^T\mathbb{E}[\nabla^2 f(\bm{x})]\bm{w}-\lambda/4\\&+\frac{1}{2}\bm{w}^T\mathbb{E}[\nabla^2 f(\bm{x})]\bm{w}-\frac{1}{2}\bm{w}^T\nabla^2 f(\bm{x})\bm{w}\\
&\geq 6t^2+12\bm{w}^T\bm{x}t+\lambda/2,
\end{align*}
where we use the concentration of the Hessian around its mean
from Lemma~\ref{lem:4}.

Since the smallest eigenvalue of
$\mathbb{E}[\nabla^2 f(\bm{x})]$ is $\norm{\bm{x}}^2$ (see
Lemma~\ref{lem:3}), taking $\lambda=\norm{\bm{x}}^2$, it follows that
\begin{equation*}
   \frac{1}{2}f''(t)\geq 6t^2+12\bm{w}^T\bm{x}t+\norm{\bm{x}}^2/2.
\end{equation*}
So when $0\leq t\leq \norm{\bm{x}}/24$, $f''(t)\geq 0$
holds with high probability.
\end{proof}
\bibliography{phaseretrieval}
\end{document}